\newtheorem{theorem}{Theorem}
\newtheorem{proposition}[theorem]{Proposition}
\newtheorem{lemma}[theorem]{Lemma}
\newtheorem{corollary}[theorem]{Corollary}
\newtheorem{definition}{Definition}
\newcommand{\Sym}{\operatorname{Sym}}
\newcommand{\g}{\mathfrak{g}}
\theoremstyle{definition}
\newtheorem{example}{Example}
\newtheorem{remark}{Remark}
\begin{document}

\pagestyle{plain}

\title{Mirror Symmetry of Spencer-Hodge Decompositions in Constrained Geometric Systems}
\author{Dongzhe Zheng\thanks{Department of Mechanical and Aerospace Engineering, Princeton University\\ Email: \href{mailto:dz5992@princeton.edu}{dz5992@princeton.edu}, \href{mailto:dz1011@wildcats.unh.edu}{dz1011@wildcats.unh.edu}}}

\date{}
\maketitle

\begin{abstract}
This paper systematically investigates the interaction mechanism between metric structures and mirror transformations in Spencer complexes of compatible pairs. Our core contribution is the establishment of mirror symmetry for Spencer-Hodge decomposition theory, solving the key technical problem of analyzing the behavior of metric geometry under sign transformations. Through precise operator difference analysis, we prove that the perturbation $\mathcal{R}^k = -2(-1)^k \omega \otimes \delta^{\lambda}_{\mathfrak{g}}(s)$ induced by the mirror transformation $(D,\lambda) \mapsto (D,-\lambda)$ is a bounded compact operator, and apply Fredholm theory to establish the mirror invariance of harmonic space dimensions $\dim \mathcal{H}^k_{D,\lambda} = \dim \mathcal{H}^k_{D,-\lambda}$. We further prove the complete invariance of constraint strength metrics and curvature geometric metrics under mirror transformations, thus ensuring the spectral structure stability of Spencer-Hodge Laplacians. From a physical geometric perspective, our results reveal that sign transformations of constraint forces do not affect the essential topological structure of constraint systems, embodying deep symmetry principles in constraint geometry. This work connects Spencer metric theory with mirror symmetry theory, laying the foundation for further development of constraint geometric analysis and computational methods.
\end{abstract}

\section{Introduction}

The geometric theory of constraint systems on principal bundles has undergone rapid development in recent years, with its origins traceable to the intersection of Dirac constraint theory\cite{dirac2001lectures} and modern differential geometry. In particular, the deep connection between constraint distributions and Spencer cohomology established through the concept of compatible pairs\cite{zheng2025dynamical} provides a new geometric perspective for understanding topological obstructions in constraint systems. Compatible pairs $(D,\lambda)$ consist of a constraint distribution $D \subset TP$ satisfying strong transversality conditions and a dual constraint function $\lambda: P \to \g^*$ obeying modified Cartan equations, geometrizing the Lagrange multiplier method in classical constraint mechanics and providing a unified theoretical framework for geometric analysis of constraint systems.

This theoretical development has been profoundly influenced by the application of Spencer theory in differential geometry\cite{spencer1962deformation}. Spencer complexes were originally introduced by Spencer in studying integrability conditions of variational problems and later developed by Quillen and others as fundamental tools for studying deformation theory of differential operators\cite{quillen1970formal}. Our Spencer complexes of compatible pairs can be viewed as a natural generalization of this classical theory in the context of constraint geometry, which not only preserves the algebraic elegance of Spencer theory but also reflects the physical characteristics of constraint systems.

Building on the established theoretical foundation, the literature\cite{zheng2025geometric} developed metric theory for Spencer complexes of compatible pairs, constructing two complementary metric schemes based on constraint strength and curvature geometry. This work drew on the successful application of Hodge theory in elliptic complexes\cite{warner1983foundations}, establishing corresponding Spencer-Hodge decomposition theory. Simultaneously, the concept of mirror symmetry was introduced into compatible pair theory\cite{zheng2025constructing}, constructing mirror transformations on compatible pairs through Lie group automorphisms. Although this mirror symmetry bears formal analogy to mirror symmetry in string theory\cite{greene1990mirror}, its mathematical mechanism is based on purely geometric symmetry principles, providing new tools for constraint system analysis.

However, the systematic combination of metric theory with mirror symmetry theory still presents important theoretical problems to be resolved. In particular, how mirror transformations affect the metric structure of Spencer complexes, and how the resulting Spencer-Hodge decompositions maintain symmetry under mirrors, are crucial for establishing the completeness of the theoretical framework. While existing results have established mirror isomorphisms at the cohomological level, systematic analysis of mirror behavior of metric structures is still insufficient, which to some extent limits the theoretical potential for applications in computational geometry and numerical analysis.

The main objective of this paper is to establish the mathematical theory of mirror symmetry for Spencer complexes of compatible pairs. Our approach is based on classical perturbation theory of elliptic operators\cite{kato1995perturbation} and modern developments in Fredholm theory\cite{rempel1982index}. By analyzing the behavior of Spencer differential operators under sign mirror transformations, we find that although Spencer differentials do not completely commute with mirror transformations, their differences are controlled by explicit bounded compact operators. Based on this key observation, we use perturbation theory of elliptic operators to establish the equivalence of harmonic space dimensions, thereby providing a solid functional analytic foundation for mirror isomorphisms of Spencer cohomology.

Our main contributions include several aspects. First, we prove the invariance of both Spencer metrics under sign mirror transformations, laying the foundation for subsequent Hodge theory analysis. Second, through operator difference analysis, we establish a mathematical description of Spencer differential mirror relations and prove the compactness properties of difference operators. Third, we apply Fredholm theory to prove the spectral structure stability of Spencer-Hodge Laplacians under mirrors, thereby establishing the equivalence of harmonic space dimensions. Finally, we provide the theoretical foundation for computational frameworks and verify the feasibility of the theory through concrete examples of spherical constraint systems.

These results not only theoretically advance the application of Spencer theory in constraint geometry but also provide new analytical tools for related computational problems. While our theory is currently established mainly within classical geometric frameworks, its mathematical structure lays the foundation for subsequent quantization research and physical applications. We believe that with further theoretical development and refinement of numerical methods, Spencer mirror symmetry will play important roles in constraint mechanics, gauge field theory, and computational geometry\cite{bryant1991exterior}.

From a long-term perspective, this theoretical framework promises breakthroughs in multiple directions. Mathematically, it may provide new perturbation analysis paradigms for elliptic operator theory and open new avenues for understanding symmetry properties of complex geometric systems. In physical applications, it may provide new geometric tools for constraint analysis in gauge field theory, mirror phenomena in string theory, and constraint problems in quantum gravity. In computational science, numerical methods based on mirror symmetry hold promise for improving the accuracy and efficiency of constraint system solving. Of course, realizing these application potentials requires sustained research efforts in theoretical deepening, algorithm development, and numerical verification.

\section{Theoretical Foundation: Formulation of the Metric-Mirror Coupling Problem}

Based on the compatible pair theory, Spencer metric theory, and mirror symmetry theory established in the literature\cite{zheng2025dynamical, zheng2025geometric, zheng2025constructing}, this section clearly formulates the core problem to be solved in the current work: the interaction mechanism between metric structures and mirror transformations.

\subsection{Core Results of Established Theory}

Compatible pair theory\cite{zheng2025dynamical} establishes the geometric relationship between constraint distributions $D \subset TP$ and dual constraint functions $\lambda: P \to \mathfrak{g}^*$. The coordinated unification of strong transversality conditions $D_p \oplus V_p = T_pP$, modified Cartan equations $d\lambda + \mathrm{ad}^*_\omega \lambda = 0$, and compatibility conditions $D_p = \{v : \langle\lambda(p), \omega(v)\rangle = 0\}$ provides a complete framework for geometric analysis of constraint systems. Strong transversality conditions not only guarantee the geometric non-degeneracy of constraint distributions but are also key technical conditions for Spencer theory analysis.

Spencer metric theory\cite{zheng2025geometric} constructs two complete metric schemes. The first set of tensor metrics based on constraint strength modifies standard tensor inner products through weight functions $w_\lambda(x) = 1 + \|\lambda(p)\|^2_{\mathfrak{g}^*}$:
\begin{equation}
\langle\omega_1 \otimes s_1, \omega_2 \otimes s_2\rangle_{\text{constraint}} = \int_M w_\lambda(x) \langle\omega_1, \omega_2\rangle \langle s_1, s_2\rangle_{\mathrm{Sym}} \, d\mu.
\end{equation}
The second set starts from the curvature geometry of principal bundles, using curvature strength functions $\kappa_\omega(x) = 1 + \|\Omega_p\|^2$ to construct geometrically induced metrics:
\begin{equation}
\langle\omega_1 \otimes s_1, \omega_2 \otimes s_2\rangle_{\text{curvature}} = \int_M \kappa_\omega(x) \langle\omega_1, \omega_2\rangle \langle s_1, s_2\rangle_{\mathrm{Sym}} \, d\mu.
\end{equation}
Both metrics provide elliptic structures for Spencer complexes, ensuring the existence of Spencer-Hodge decompositions $S^k_{D,\lambda} = \mathcal{H}^k_{D,\lambda} \oplus \mathrm{im}(D^{k-1}_{D,\lambda}) \oplus \mathrm{im}((D^k_{D,\lambda})^*)$.

Mirror symmetry theory\cite{zheng2025constructing} establishes systematic mirror transformation families for compatible pairs, from basic sign mirrors $(D,\lambda) \mapsto (D,-\lambda)$ to general automorphism-induced mirrors $(D,\lambda) \mapsto (\Phi_*(D), (d\phi)^*(\lambda \circ \Phi^{-1}))$. The key result is that these transformations preserve all geometric properties of compatible pairs and induce natural isomorphisms between Spencer cohomology groups $H^k_{\mathrm{Spencer}}(D,\lambda) \cong H^k_{\mathrm{Spencer}}(D',-\lambda')$.

\subsection{Basic Construction of Spencer Complexes}

Based on compatible pairs $(D,\lambda)$, the construction of Spencer complexes requires a clear definition of the \textbf{constraint-coupled Spencer operator}. 
Spencer spaces are defined as $S^k_{D,\lambda} := \Omega^k(M) \otimes \mathrm{Sym}^k(\mathfrak{g})$. The total Spencer differential $D^k_{D,\lambda}: S^k_{D,\lambda} \to S^{k+1}_{D,\lambda}$ consists of the standard exterior differential $d$ and the Spencer extension operator $\delta^{\lambda}_{\mathfrak{g}}$:
\begin{equation}
D^k_{D,\lambda}(\omega \otimes s) = d\omega \otimes s + (-1)^k\omega \otimes \delta^{\lambda}_{\mathfrak{g}}(s).
\end{equation}

To avoid mathematical ambiguities found in symbolic expressions, we establish the operator $\delta^{\lambda}_{\mathfrak{g}}$ on a rigorous constructive foundation.

\begin{definition}[Constraint-Coupled Spencer Operator \cite{zheng2025constructing}]
The \textbf{constraint-coupled Spencer operator} $\delta^\lambda_\mathfrak{g}$ is a +1-degree graded derivation on the symmetric algebra $\Sym(\g) = \bigoplus_{k=0}^{\infty} \Sym^k(\g)$, uniquely determined by the following two rules:
\begin{enumerate}[label=\textbf{\alph*.}, wide, labelwidth=!, labelindent=0pt]
    \item \textbf{Action on Generators ($k=1$)}: For any Lie algebra element $v \in \g \cong \Sym^1(\g)$, its image $\delta^\lambda_\mathfrak{g}(v)$ is a second-order symmetric tensor in $\Sym^2(\g)$, defined by its action on test vectors $w_1, w_2 \in \g$:
    \begin{equation}
        (\delta^\lambda_\mathfrak{g}(v))(w_1, w_2) := \frac{1}{2} \left( \langle \lambda, [w_1, [w_2, v]] \rangle + \langle \lambda, [w_2, [w_1, v]] \rangle \right).
    \end{equation}
    This definition explicitly ensures the output is symmetric with respect to $w_1$ and $w_2$.

    \item \textbf{Leibniz Rule Extension}: The action on higher-order tensors is defined via the graded Leibniz rule. For any $s_1 \in \Sym^p(\g)$ and $s_2 \in \Sym^q(\g)$:
    \begin{equation}
        \delta^\lambda_\mathfrak{g}(s_1 \odot s_2) := \delta^\lambda_\mathfrak{g}(s_1) \odot s_2 + (-1)^p s_1 \odot \delta^\lambda_\mathfrak{g}(s_2),
    \end{equation}
    where $\odot$ denotes the symmetric tensor product.
\end{enumerate}
\end{definition}

\begin{proposition}[Equivalent Algebraic Expression \cite{zheng2025constructing}]
The operator's action on generators, as defined above, can be equivalently expressed as:
\begin{equation}
(\delta^\lambda_\mathfrak{g}(v))(w_1, w_2) = \langle \lambda, [w_2, [w_1, v]] \rangle + \frac{1}{2} \langle \lambda, [[w_1, w_2], v] \rangle.
\end{equation}
\end{proposition}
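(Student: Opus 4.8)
The plan is to treat this as a purely algebraic identity on the Lie algebra $\g$, driven entirely by the Jacobi identity together with the linearity of the pairing $\langle \lambda, \cdot \rangle$ and the antisymmetry of the bracket. The two expressions differ only in how the nested-bracket combination is reorganized, so the task reduces to showing that the first summand of the defining formula can be traded for the remaining terms in the target formula. No analytic input is needed; everything happens fiberwise on $\Sym^1(\g) \cong \g$.

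First I would apply the Jacobi identity to the triple $(w_1, w_2, v)$, in the form
\[
[w_1,[w_2,v]] + [w_2,[v,w_1]] + [v,[w_1,w_2]] = 0,
\]
and then eliminate the awkwardly ordered brackets using antisymmetry, namely $[w_2,[v,w_1]] = -[w_2,[w_1,v]]$ and $[v,[w_1,w_2]] = -[[w_1,w_2],v]$. This produces the key rewriting
\[
[w_1,[w_2,v]] = [w_2,[w_1,v]] + [[w_1,w_2],v].
\]

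Next I would substitute this expression for $[w_1,[w_2,v]]$ into the first summand of the definition
\[
(\delta^\lambda_\mathfrak{g}(v))(w_1,w_2) = \tfrac{1}{2}\langle \lambda, [w_1,[w_2,v]]\rangle + \tfrac{1}{2}\langle \lambda, [w_2,[w_1,v]]\rangle,
\]
and use linearity of $\langle \lambda, \cdot \rangle$ to split the result. The two half-weighted copies of $\langle \lambda, [w_2,[w_1,v]]\rangle$ then combine into a single full-weight term, while the surviving $\tfrac{1}{2}\langle \lambda, [[w_1,w_2],v]\rangle$ term reproduces the curvature-type contribution, yielding the claimed formula exactly.

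This argument is elementary, and the only genuine care required is consistent sign bookkeeping when invoking antisymmetry inside the Jacobi identity, since a single misplaced sign would flip the $[[w_1,w_2],v]$ term; I therefore regard the sign tracking as the sole (minor) obstacle. I would close by noting that both expressions are well-defined and agree for all test vectors $w_1, w_2 \in \g$, so the two presentations of $\delta^\lambda_\mathfrak{g}$ on generators coincide.
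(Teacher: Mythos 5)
Your proposal is correct and follows essentially the same route as the paper: rewrite $[w_1,[w_2,v]]$ via the Jacobi identity as $[w_2,[w_1,v]] + [[w_1,w_2],v]$, substitute into the first summand of the defining formula, and combine the two half-weighted copies of $\langle\lambda,[w_2,[w_1,v]]\rangle$. The only difference is that you explicitly derive this form of the Jacobi identity from the cyclic version using antisymmetry, which the paper simply asserts; your sign bookkeeping there is correct.
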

\begin{proof}
Using the Jacobi identity in the form $[w_1, [w_2, v]] = [w_2, [w_1, v]] + [[w_1, w_2], v]$, we substitute the first term in the constructive definition:
\begin{align}
(\delta^{\lambda}_{\mathfrak{g}}(v))(w_1,w_2) &= \frac{1}{2}\left( \langle\lambda,[w_1,[w_2,v]]\rangle + \langle\lambda,[w_2,[w_1,v]]\rangle \right) \\
&= \frac{1}{2}\left( \langle\lambda, [w_2,[w_1,v]] + [[w_1,w_2],v] \rangle + \langle\lambda,[w_2,[w_1,v]]\rangle \right) \\
&= \frac{1}{2}\left( 2\langle\lambda, [w_2,[w_1,v]]\rangle + \langle\lambda, [[w_1,w_2],v]\rangle \right) \\
&= \langle\lambda, [w_2,[w_1,v]]\rangle + \frac{1}{2}\langle\lambda, [[w_1,w_2],v]\rangle.
\end{align}
This confirms the equivalence.
\end{proof}

\begin{remark}[Core Algebraic Properties]
This rigorous constructive definition ensures the operator possesses the two crucial properties for the entire theory:
\begin{itemize}
    \item \textbf{Nilpotency}: $(\delta^\lambda_\mathfrak{g})^2 = 0$. This property, which ensures the Spencer complex is well-defined, is proven by induction on the tensor degree, with the base case relying on the Jacobi identity for Lie algebras \cite{zheng2025constructing}.
    \item \textbf{Mirror Antisymmetry}: $\delta^{-\lambda}_\mathfrak{g} = -\delta^\lambda_\mathfrak{g}$. This follows directly from the linearity of $\lambda$ in the constructive definition.
\end{itemize}
These properties guarantee the well-definedness of the Spencer cohomology groups $H^k_{\mathrm{Spencer}}(D,\lambda) = \ker(D^k_{D,\lambda})/\mathrm{im}(D^{k-1}_{D,\lambda})$.
More detailed proof is in the preliminary works \cite{zheng2025constructing, zheng2025geometric}.
\end{remark}


\subsection{Key Problems in Metric-Mirror Coupling}

Although existing theories have separately established Spencer metric structures and mirror symmetry, important theoretical gaps remain in their interaction mechanisms.

\textbf{Metric Behavior Problem}: How does the mirror transformation $(D,\lambda) \mapsto (D,-\lambda)$ affect the specific numerical values and geometric properties of Spencer metrics? Do the weight functions $w_\lambda$ and $\kappa_\omega$ under the sign transformation $\lambda \mapsto -\lambda$ preserve the geometric meaning of the metrics? From an algebraic perspective, $w_{-\lambda}(x) = 1 + \|-\lambda(p)\|^2_{\mathfrak{g}^*} = w_\lambda(x)$ suggests that constraint strength metrics should remain invariant under sign transformations, but the deep geometric meaning of this invariance and its impact on the metric structure of Spencer complexes requires systematic analysis.

\textbf{Hodge Decomposition Stability}: Does the mirror transformation preserve the structure of Spencer-Hodge decompositions? Namely, do we have $\mathcal{H}^k_{D,\lambda} \cong \mathcal{H}^k_{D,-\lambda}$ as metric spaces? Is this isomorphism isometric? Can the known cohomological level isomorphism $H^k_{\mathrm{Spencer}}(D,\lambda) \cong H^k_{\mathrm{Spencer}}(D,-\lambda)$ be lifted to isometric isomorphisms of harmonic spaces under metric structures?

\textbf{Operator Difference Analysis}: What operator-theoretic properties does the difference $\mathcal{R}^k = D^k_{D,-\lambda} - D^k_{D,\lambda}$ between Spencer differential $D^k_{D,\lambda}$ and its mirror $D^k_{D,-\lambda}$ possess? Since $\delta^{-\lambda}_{\mathfrak{g}} = -\delta^{\lambda}_{\mathfrak{g}}$, we can expect $\mathcal{R}^k = -2(-1)^k \omega \otimes \delta^{\lambda}_{\mathfrak{g}}(s)$, but the compactness, boundedness, and interaction with ellipticity of such difference operators requires detailed study.

\textbf{Spectral Structure Behavior}: How does the spectrum of Spencer-Hodge Laplacians $\Delta^k_{D,\lambda} = D^k_{D,\lambda}(D^k_{D,\lambda})^* + (D^{k-1}_{D,\lambda})^* D^{k-1}_{D,\lambda}$ change under mirror transformations? Is the multiplicity of zero eigenvalues (i.e., harmonic space dimensions) strictly preserved? What impact do mirror transformations have on the distribution of non-zero eigenvalues?

The technical challenge of these problems lies in: mirror transformations change Spencer prolongation operators $\delta^{\lambda}_{\mathfrak{g}} \mapsto \delta^{-\lambda}_{\mathfrak{g}} = -\delta^{\lambda}_{\mathfrak{g}}$, thereby affecting the entire form of Spencer differentials. Although cohomological level isomorphisms are known to exist, metric level analysis requires more refined operator-theoretic tools.

Our solution strategy is based on several key observations. First, the mirror behavior of metric weights indicates that constraint strength metrics have intrinsic sign invariance, providing a starting point for metric invariance analysis. Second, the algebraic structure $\mathcal{R}^k = -2(-1)^k \omega \otimes \delta^{\lambda}_{\mathfrak{g}}(s)$ of operator differences has explicit algebraic form, whose analytical properties can be systematically studied through elliptic theory and Fredholm theory. Third, mirror perturbations of Spencer-Hodge Laplacians can be understood as compact perturbations, allowing us to analyze spectral stability using perturbation theory.

Based on these observations, we will systematically solve the metric-mirror coupling problem through compactness analysis of operator differences, application of Fredholm theory, and stability studies of elliptic estimates, establishing a complete theoretical framework.

\section{Mirror Invariance of Spencer Metrics}

The foundation of mirror symmetry theory is analyzing the behavior of Spencer metric structures under sign transformations. We will prove that both Spencer metrics remain completely invariant under sign mirror transformations $(D,\lambda) \mapsto (D,-\lambda)$, providing the key technical foundation for subsequent Hodge theory analysis.

\subsection{Mirror Invariance of Constraint Strength Metrics}

Constraint strength metrics are constructed based on weight functions $w_\lambda(x) = 1 + \|\lambda(p)\|^2_{\g^*}$, where $\|\cdot\|_{\g^*}$ is the dual norm induced by the Lie algebra inner product. Under sign mirror transformations, we need to analyze the change behavior of weight functions.

For the mirror system $(D,-\lambda)$, the corresponding weight function is
\begin{equation}
w_{-\lambda}(x) = 1 + \|(-\lambda)(p)\|^2_{\g^*} = 1 + \|\lambda(p)\|^2_{\g^*} = w_\lambda(x).
\end{equation}

This equality holds based on the positive homogeneity property of norms $\|-\xi\| = \|\xi\|$ for all $\xi \in \g^*$. Therefore, constraint strength weight functions are completely invariant under sign mirrors, meaning Spencer metrics based on constraint strength remain completely consistent under mirror transformations.

\begin{theorem}
Constraint strength Spencer metrics are strictly invariant under sign mirror transformations:
\begin{equation}
\langle u_1, u_2 \rangle_{A,-\lambda} = \langle u_1, u_2 \rangle_{A,\lambda}
\end{equation}
for all $u_1, u_2 \in S^k_{D,\lambda}$.
\end{theorem}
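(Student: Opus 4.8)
The plan is to reduce the entire statement to the single pointwise identity $w_{-\lambda} = w_\lambda$ already recorded above, after first confirming that this weight is the \emph{only} channel through which the dual constraint function $\lambda$ enters the constraint strength metric. I would begin by recalling the defining formula for the scheme indexed by $A$,
\begin{equation}
\langle \omega_1 \otimes s_1, \omega_2 \otimes s_2 \rangle_{A,\lambda} = \int_M w_\lambda(x)\, \langle \omega_1, \omega_2 \rangle\, \langle s_1, s_2 \rangle_{\mathrm{Sym}}\, d\mu,
\end{equation}
and observing that the spaces $S^k_{D,\lambda}$ and $S^k_{D,-\lambda}$ coincide as vector spaces, since the underlying object $\Omega^k(M) \otimes \mathrm{Sym}^k(\g)$ carries no $\lambda$-dependence. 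The two metrics are therefore genuinely two bilinear forms on one and the same space, so it suffices to compare their integrands term by term on decomposable elements and then extend by bilinearity.

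The core of the argument is isolating the $\lambda$-dependence. I would note that the form inner product $\langle \omega_1, \omega_2 \rangle$ is fixed by the Riemannian metric and Hodge structure on $M$, that the symmetric inner product $\langle s_1, s_2 \rangle_{\mathrm{Sym}}$ is induced by a fixed background inner product on $\g$, and that $d\mu$ is the Riemannian volume element; none of these three objects involves $\lambda$. Consequently the only $\lambda$-sensitive factor in the integrand is the scalar weight $w_\lambda(x)$. Invoking the identity $w_{-\lambda}(x) = 1 + \|-\lambda(p)\|^2_{\g^*} = 1 + \|\lambda(p)\|^2_{\g^*} = w_\lambda(x)$ — which rests most cleanly on $\langle -\xi, -\xi\rangle = \langle \xi, \xi \rangle$ for the inner-product norm on $\g^*$, equivalently on the absolute homogeneity $\|(-1)\xi\| = |-1|\,\|\xi\|$ — the two integrands become identical functions on $M$. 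Integrating gives equality on every decomposable pair $\omega_1 \otimes s_1,\ \omega_2 \otimes s_2$, and bilinearity in each argument then yields the claim for all $u_1, u_2 \in S^k_{D,\lambda}$.

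The matter requiring genuine attention, rather than a true obstacle, is the assertion that $\langle \cdot, \cdot \rangle_{\mathrm{Sym}}$ carries no latent dependence on $\lambda$: were the inner product on $\mathrm{Sym}^k(\g)$ constructed from the constraint data itself (for instance through a $\lambda$-twisted or $\Ad^*_\lambda$-weighted pairing), the cancellation would fail and the two metrics could differ. I would therefore make explicit the convention from the established metric theory \cite{zheng2025geometric} that this symmetric inner product is built solely from a fixed, compatible-pair-independent inner product on $\g$. Once this convention is pinned down, the factorization of the integrand into $\lambda$-independent tensorial factors times the invariant scalar weight makes the result immediate, and the proof is complete.
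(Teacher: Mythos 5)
Your proposal is correct and follows essentially the same route as the paper, which likewise reduces the theorem to the pointwise identity $w_{-\lambda}(x) = 1 + \|-\lambda(p)\|^2_{\g^*} = w_\lambda(x)$ and notes that the weight function is the only place $\lambda$ enters the constraint strength metric. Your added care in confirming that $\langle\cdot,\cdot\rangle_{\mathrm{Sym}}$, the form inner product, and $d\mu$ are all $\lambda$-independent (and your correction that the relevant norm property is absolute homogeneity rather than the paper's ``positive homogeneity'') only makes the same argument more explicit.
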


This result has important geometric significance. It shows that constraint strength metrics truly reflect the intrinsic geometric properties of constraint systems, independent of the sign choice of dual functions $\lambda$. In physical applications, this means that positive and negative constraint force systems have the same metric geometric structure, embodying deep symmetry in constraint systems.

\subsection{Mirror Invariance of Curvature Geometric Metrics}

Curvature geometric metrics are constructed based on curvature information of principal bundles. The curvature strength function is defined as $\kappa_\omega(x) = 1 + \|\Omega_p\|^2$, where $\Omega = d\omega + \frac{1}{2}[\omega \wedge \omega]$ is the curvature form of the connection.

In analyzing the mirror behavior of curvature metrics, the key observation is that the curvature form $\Omega$ is completely independent of the dual function $\lambda$ of compatible pairs. Curvature is determined only by the geometric properties of the connection $\omega$, and the connection remains invariant under sign mirror transformations $(D,\lambda) \mapsto (D,-\lambda)$ because the constraint distribution $D$ itself does not change.

Therefore, the curvature strength function strictly maintains under mirror transformations: $\kappa_\omega(x) = \kappa_\omega(x)$. This directly leads to the mirror invariance of curvature geometric Spencer metrics.

\begin{theorem}
Curvature geometric Spencer metrics are strictly invariant under sign mirror transformations:
\begin{equation}
\langle u_1, u_2 \rangle_{B,-\lambda} = \langle u_1, u_2 \rangle_{B,\lambda}
\end{equation}
for all $u_1, u_2 \in S^k_{D,\lambda}$.
\end{theorem}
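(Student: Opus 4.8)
The plan is to exploit the structural observation recorded just before the statement: the curvature weight $\kappa_\omega(x)=1+\|\Omega_p\|^2$ carries \emph{no} dependence on the dual constraint function $\lambda$. Concretely, the $B$-metric on $S^k_{D,\lambda}=\Omega^k(M)\otimes\Sym^k(\g)$ is the integral of a pointwise product of three factors, $\kappa_\omega(x)$, the form inner product $\langle\omega_1,\omega_2\rangle$, and the symmetric-tensor inner product $\langle s_1,s_2\rangle_{\mathrm{Sym}}$. I would therefore reduce the theorem to checking that each of these three factors is unchanged under $(D,\lambda)\mapsto(D,-\lambda)$, and that the two metrics are in fact defined on the same underlying space.

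First I would verify that the Spencer spaces coincide: since $S^k_{D,\lambda}=\Omega^k(M)\otimes\Sym^k(\g)$ depends only on $M$ and $\g$, we have $S^k_{D,-\lambda}=S^k_{D,\lambda}$ as vector spaces, so comparing $\langle\cdot,\cdot\rangle_{B,-\lambda}$ with $\langle\cdot,\cdot\rangle_{B,\lambda}$ is meaningful on the nose. Next I would address the weight. The curvature is governed by the structure equation $\Omega=d\omega+\tfrac12[\omega\wedge\omega]$, which involves only the connection $1$-form $\omega$. Under the sign mirror the constraint distribution $D$ is fixed by hypothesis, and the connection is determined by the transversality splitting $D_p\oplus V_p=T_pP$ rather than by $\lambda$; the role of $\lambda$ in the compatibility condition is only to \emph{characterize} $D$, not to build $\omega$. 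Hence $\omega$, and therefore $\Omega_p$ and $\|\Omega_p\|^2$, are mirror-invariant, giving $\kappa_\omega^{(-\lambda)}(x)=\kappa_\omega^{(\lambda)}(x)$ pointwise.

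Finally I would note that the remaining two factors are manifestly $\lambda$-free: the inner product on $\Omega^k(M)$ comes from the fixed Riemannian structure (Hodge star) on $M$, and the inner product on $\Sym^k(\g)$ is induced by a fixed $\Ad$-invariant inner product on $\g$; neither construction references $\lambda$. With all three factors and the volume form $d\mu$ identical, the two integrands agree pointwise, so the integrals agree and $\langle u_1,u_2\rangle_{B,-\lambda}=\langle u_1,u_2\rangle_{B,\lambda}$ for every $u_1,u_2$.

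I expect no genuine analytic obstacle here; the content is organizational rather than technical. The subtle point to get right — and the only place where a careless argument could fail — is the claim that $\omega$ is pinned down by the mirror-invariant datum $D$ and is not implicitly re-specified through $\lambda$. In contrast with the constraint-strength metric, where invariance rested on the even dependence $\|-\lambda\|^2=\|\lambda\|^2$, the curvature metric is invariant for the stronger reason that its weight never sees $\lambda$ at all; once the $\lambda$-independence of $\Omega$ is secured, the theorem follows immediately.
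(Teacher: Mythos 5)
Your proposal is correct and follows essentially the same route as the paper: both arguments rest on the single observation that the curvature form $\Omega = d\omega + \tfrac{1}{2}[\omega\wedge\omega]$ depends only on the connection, which is pinned down by the unchanged distribution $D$ and not by $\lambda$, so the weight $\kappa_\omega$ and hence the metric are untouched by the sign flip. Your additional explicit checks (that the Spencer spaces coincide and that the form and symmetric-tensor inner products are $\lambda$-free) are left implicit in the paper but do not change the substance of the argument.
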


The mirror invariance of curvature metrics reveals a profound geometric fact: the curvature geometry of principal bundles is completely independent of the sign choice of dual functions in constraint systems. This independence reflects the subtle balance between geometric structures and constraint algebraic structures, an important manifestation of the unity of geometry and algebra in compatible pair theory.

\subsection{Preservation of Metric Equivalence}

The literature\cite{zheng2025geometric} proved that both Spencer metrics are topologically equivalent on compact manifolds, i.e., there exist constants $c_1, c_2 > 0$ such that $c_1\langle u, u\rangle_A \leq \langle u, u\rangle_B \leq c_2\langle u, u\rangle_A$ for all $u \in S^k_{D,\lambda}$.

Since both metrics remain invariant under mirror transformations, this equivalence relation also strictly maintains in mirror systems. This property ensures consistency of mirror theory analysis results based on different metrics, providing a stable theoretical foundation for subsequent Hodge decomposition studies.

The mirror invariance of metric structures is the cornerstone for establishing Spencer-Hodge theory mirror symmetry. It not only guarantees consistency of formal adjoint operators under mirrors but also provides necessary technical conditions for elliptic operator perturbation analysis. These properties will play key roles in the Spencer differential analysis of the next section.

\section{Mirror Relation Analysis of Spencer Differentials}

The behavior of Spencer differential operators under mirror transformations is the core problem in understanding mirror symmetry. Although Spencer differentials do not completely commute with mirror transformations, their relationship can be characterized through precise operator difference analysis. This section will establish a rigorous mathematical description of this relationship and analyze its impact on Spencer complex structures.

\subsection{Sign Properties of Spencer Operators}

According to results in the literature\cite{zheng2025constructing}, Spencer prolongation operators satisfy important antisymmetric properties under sign transformations.



\begin{lemma}
Spencer prolongation operators satisfy sign antisymmetry: $\delta^{-\lambda}_{\g} = -\delta^{\lambda}_{\g}$.
\end{lemma}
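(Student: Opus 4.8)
The plan is to prove $\delta^{-\lambda}_{\g} = -\delta^{\lambda}_{\g}$ directly from the constructive definition, exploiting the fact that the operator depends $\mathbb{R}$-linearly on the dual function $\lambda$. The statement asserts an equality of two graded derivations on $\Sym(\g)$, and the standard way to verify such an equality is to check it first on the generators $\Sym^1(\g) = \g$ and then argue that agreement on generators, combined with the shared Leibniz structure, forces agreement on all of $\Sym(\g)$.

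First I would establish the base case on generators. Fix $v \in \g$ and test vectors $w_1, w_2 \in \g$. Applying the defining formula with $-\lambda$ in place of $\lambda$ gives
\begin{equation}
(\delta^{-\lambda}_{\g}(v))(w_1,w_2) = \tfrac{1}{2}\left( \langle -\lambda, [w_1,[w_2,v]]\rangle + \langle -\lambda, [w_2,[w_1,v]]\rangle \right).
\end{equation}
Since the pairing $\langle\,\cdot\,,\,\cdot\,\rangle$ between $\g^*$ and $\g$ is linear in its first slot, $\langle -\lambda, X\rangle = -\langle \lambda, X\rangle$ for every $X \in \g$, so the right-hand side equals $-\tfrac{1}{2}(\langle\lambda,[w_1,[w_2,v]]\rangle + \langle\lambda,[w_2,[w_1,v]]\rangle) = -(\delta^{\lambda}_{\g}(v))(w_1,w_2)$. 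As this holds for all test vectors $w_1, w_2$, we conclude $\delta^{-\lambda}_{\g}(v) = -\delta^{\lambda}_{\g}(v)$ on $\Sym^1(\g)$.

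Next I would promote this to higher degrees by induction on the tensor degree using the Leibniz rule. Both $\delta^{-\lambda}_{\g}$ and $-\delta^{\lambda}_{\g}$ are $+1$-degree graded derivations on $\Sym(\g)$: the first by definition, and the second because a scalar multiple of a graded derivation is again a graded derivation of the same degree and parity. Since a graded derivation is uniquely determined by its values on the algebra generators, and the two operators agree on $\Sym^1(\g)$ by the base case, they agree everywhere. Concretely, for $s_1 \in \Sym^p(\g)$ and $s_2 \in \Sym^q(\g)$ one checks that if the identity holds on $s_1$ and $s_2$ then it propagates through $\delta^{-\lambda}_{\g}(s_1 \odot s_2) = \delta^{-\lambda}_{\g}(s_1)\odot s_2 + (-1)^p s_1 \odot \delta^{-\lambda}_{\g}(s_2)$, where substituting the inductive hypothesis yields exactly $-\delta^{\lambda}_{\g}(s_1 \odot s_2)$ via the same Leibniz expansion.

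The argument is essentially routine, so there is no serious obstacle; the only point requiring any care is confirming that $-\delta^{\lambda}_{\g}$ genuinely satisfies the \emph{signed} Leibniz rule with the correct $(-1)^p$ factor, so that the uniqueness-of-derivation principle applies cleanly. This is immediate once one notes that multiplying a graded derivation by the scalar $-1$ preserves both its degree and the sign conventions in its Leibniz rule, so the two derivations share the same extension mechanism and their agreement on generators is decisive.
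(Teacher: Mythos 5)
Your proof is correct and follows essentially the same route as the paper's: verify the sign flip on generators using linearity of the pairing $\langle\lambda,\cdot\rangle$ in $\lambda$, then extend to all of $\Sym(\g)$ via the graded Leibniz rule. Your added remark that $-\delta^{\lambda}_{\g}$ is itself a graded derivation of the same degree (so agreement on generators is decisive) makes the extension step slightly more explicit than the paper's one-line appeal to the Leibniz rule, but the argument is the same.
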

\begin{proof}
This property follows directly from the constructive definition. For any generator $v \in \g$, the pairing $\langle \lambda, \cdot \rangle$ is linear in $\lambda$. Therefore, the transformation $\lambda \mapsto -\lambda$ induces an overall sign change in the definition of $\delta^\lambda_\g(v)$:
\begin{align}
(\delta^{-\lambda}_\mathfrak{g}(v))(w_1, w_2) &= \frac{1}{2} \left( \langle -\lambda, [w_1, [w_2, v]] \rangle + \langle -\lambda, [w_2, [w_1, v]] \rangle \right) \\
&= -\frac{1}{2} \left( \langle \lambda, [w_1, [w_2, v]] \rangle + \langle \lambda, [w_2, [w_1, v]] \rangle \right) = -(\delta^{\lambda}_{\g}(v))(w_1, w_2).
\end{align}
The property extends to all higher-order tensors via the Leibniz rule.
\end{proof}

This sign property is the foundation for Spencer operator mirror analysis, directly reflecting the algebraic impact of dual function sign changes on Spencer structures.

\subsection{Operator Differences of Spencer Differentials}

Based on the sign properties of Spencer prolongation operators, we can analyze the mirror behavior of complete Spencer differential operators. Define the operator difference $\mathcal{R}^k := D^k_{D,-\lambda} - D^k_{D,\lambda}$, which characterizes the difference between original Spencer differentials and mirror Spencer differentials.

\begin{theorem}
Spencer differential operator differences have explicit expressions:
\begin{equation}
\mathcal{R}^k(\omega \otimes s) = -2(-1)^k\omega \otimes \delta^{\lambda}_{\g}(s)
\end{equation}
for all $\omega \otimes s \in S^k_{D,\lambda}$.
\end{theorem}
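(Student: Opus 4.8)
The plan is to compute $\mathcal{R}^k$ directly from the definition of the total Spencer differential, using the mirror antisymmetry of the prolongation operator established in the preceding lemma. This is fundamentally a one-line algebraic verification rather than a deep analytic argument, so the approach is entirely computational.

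First I would write out $D^k_{D,-\lambda}(\omega \otimes s)$ using the defining formula for the total Spencer differential, substituting $-\lambda$ for $\lambda$. The exterior derivative term $d\omega \otimes s$ is manifestly independent of $\lambda$, so it contributes nothing to the difference. The only $\lambda$-dependent term is $(-1)^k \omega \otimes \delta^{-\lambda}_{\g}(s)$. Next I would invoke the lemma $\delta^{-\lambda}_{\g} = -\delta^{\lambda}_{\g}$ to rewrite this as $-(-1)^k \omega \otimes \delta^{\lambda}_{\g}(s)$. Subtracting the original differential $D^k_{D,\lambda}(\omega \otimes s) = d\omega \otimes s + (-1)^k \omega \otimes \delta^{\lambda}_{\g}(s)$, the exterior-derivative terms cancel and the prolongation terms combine to give $-(-1)^k \omega \otimes \delta^{\lambda}_{\g}(s) - (-1)^k \omega \otimes \delta^{\lambda}_{\g}(s) = -2(-1)^k \omega \otimes \delta^{\lambda}_{\g}(s)$, which is exactly the claimed expression.

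The one point requiring a word of care is linearity and extension from simple tensors. The formula is stated for decomposable elements $\omega \otimes s$, and the computation above handles precisely those; since both $D^k_{D,\lambda}$ and $D^k_{D,-\lambda}$ are linear operators, the identity for $\mathcal{R}^k$ extends by linearity to all of $S^k_{D,\lambda}$, so no additional argument is needed beyond noting this. One should also observe that the prolongation operator $\delta^{\lambda}_{\g}$ in the definition of the differential acts on $s \in \Sym^k(\g)$ through its Leibniz extension, and the mirror antisymmetry $\delta^{-\lambda}_{\g} = -\delta^{\lambda}_{\g}$ holds on all of $\Sym(\g)$ (as the lemma asserts), so the sign flip is valid in every degree, not merely on generators.

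In truth there is no genuine obstacle here: the result is an immediate consequence of the linearity of the pairing $\langle \lambda, \cdot \rangle$ in $\lambda$, which forces $\delta^{\lambda}_{\g}$ to be odd under $\lambda \mapsto -\lambda$ while leaving the $\lambda$-free exterior derivative untouched. If anything, the only thing worth emphasizing is the bookkeeping of the sign $(-1)^k$ and the factor of $2$ arising from subtracting a term from its own negative; these must be tracked carefully to match the stated coefficient $-2(-1)^k$ exactly. The substance of the theorem is not this formula itself but its downstream consequence — that $\mathcal{R}^k$ is a zeroth-order (bounded, compact) operator because it involves no differentiation in the base variables — and I would anticipate that the real analytic work lies in the subsequent compactness and Fredholm arguments rather than in establishing this explicit algebraic identity.
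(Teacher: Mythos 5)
Your proposal is correct and follows essentially the same route as the paper: write out both differentials, note that the $d\omega \otimes s$ term is $\lambda$-independent, apply the antisymmetry $\delta^{-\lambda}_{\g} = -\delta^{\lambda}_{\g}$, and subtract to obtain the factor $-2(-1)^k$. Your added remarks on extension by linearity from decomposable tensors and on the antisymmetry holding in all symmetric degrees are sensible but not points the paper itself dwells on.
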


\begin{proof}
We compute step by step the action of original Spencer differentials and mirror Spencer differentials. For $\omega \otimes s \in S^k_{D,\lambda}$:

Original Spencer differential:
\begin{equation}
D^k_{D,\lambda}(\omega \otimes s) = d\omega \otimes s + (-1)^k\omega \otimes \delta^{\lambda}_{\g}(s).
\end{equation}

Mirror Spencer differential:
\begin{equation}
D^k_{D,-\lambda}(\omega \otimes s) = d\omega \otimes s + (-1)^k\omega \otimes \delta^{-\lambda}_{\g}(s).
\end{equation}

Using the sign property of Spencer operators $\delta^{-\lambda}_{\g} = -\delta^{\lambda}_{\g}$:
\begin{equation}
D^k_{D,-\lambda}(\omega \otimes s) = d\omega \otimes s + (-1)^k\omega \otimes (-\delta^{\lambda}_{\g}(s)) = d\omega \otimes s - (-1)^k\omega \otimes \delta^{\lambda}_{\g}(s).
\end{equation}

Therefore, the operator difference is:
\begin{align}
\mathcal{R}^k(\omega \otimes s) &= [d\omega \otimes s - (-1)^k\omega \otimes \delta^{\lambda}_{\g}(s)] - [d\omega \otimes s + (-1)^k\omega \otimes \delta^{\lambda}_{\g}(s)]\\
&= -(-1)^k\omega \otimes \delta^{\lambda}_{\g}(s) - (-1)^k\omega \otimes \delta^{\lambda}_{\g}(s)\\
&= -2(-1)^k\omega \otimes \delta^{\lambda}_{\g}(s).
\end{align}
\end{proof}

This result reveals the precise structure of Spencer differential mirror relations. The difference operator $\mathcal{R}^k$ is completely determined by the Spencer prolongation operator $\delta^{\lambda}_{\g}$ of the original system and has an explicit algebraic expression.

\subsection{Analytical Properties of Difference Operators}

The operator difference $\mathcal{R}^k$ has important analytical properties, which are crucial for subsequent elliptic theory analysis. We will provide rigorous proofs and precise estimates for these properties.

\begin{lemma}[Analytical Properties of Difference Operators]
The difference operator $\mathcal{R}^k: S^k_{D,\lambda} \to S^{k+1}_{D,\lambda}$ satisfies the following properties:

\begin{enumerate}
\item \textbf{Precise boundedness estimates}: There exist explicit constants depending only on the Lie algebra $\g$ such that
\begin{align}
\|\mathcal{R}^k\|_{L^2 \to L^2} &\leq 2\sqrt{k+1} \cdot C_{\text{str}}(\g) \cdot \|\lambda\|_{L^\infty(P,\g^*)},\\
\|\mathcal{R}^k\|_{H^s \to H^s} &\leq 2\sqrt{k+1} \cdot C_{\text{str}}(\g) \cdot C_{\text{Sob}}(s,M) \cdot \|\lambda\|_{C^s(P,\g^*)}
\end{align}
for all $s \geq 0$, where
\begin{align}
C_{\text{str}}(\g) &:= \sup_{\substack{X,Y \in \g \\ \|X\|_{\g}=\|Y\|_{\g}=1}} \|[X,Y]\|_{\g} \leq \sqrt{\dim(\g)} \cdot \max_{\alpha} |c^\alpha_{\beta\gamma}|,\\
C_{\text{Sob}}(s,M) &:= \sup_{\|f\|_{H^s(M)}=1} \|f\|_{C^s(M)}
\end{align}
are Lie algebra structure constants and Sobolev embedding constants\cite{adams2003sobolev}.

\item \textbf{Zero-order property and principal symbol analysis}: $\mathcal{R}^k$ is a zero-order pseudodifferential operator with complete symbol
\begin{equation}
\sigma(\mathcal{R}^k)(\xi) = (-1)^k \cdot \text{Id}_{\Omega^k} \otimes \delta^{\lambda}_{\g}, \quad \xi \in T^*M,
\end{equation}
particularly, the principal symbol $\sigma_1(\mathcal{R}^k)(\xi) = 0$ for all $\xi \neq 0$.

\item \textbf{Precise structural property}: $\mathcal{R}^k$ has tensor product decomposition structure
\begin{equation}
\mathcal{R}^k = (-1)^k \cdot \text{Id}_{\Omega^k(M)} \otimes \mathcal{S}^k_\lambda,
\end{equation}
where $\mathcal{S}^k_\lambda := -2\delta^{\lambda}_{\g}: \Sym^k(\g) \to \Sym^{k+1}(\g)$ satisfies
\begin{equation}
\|\mathcal{S}^k_\lambda\|_{\text{op}} \leq 2\sqrt{k+1} \cdot C_{\text{str}}(\g) \cdot \|\lambda\|_{\g^*}.
\end{equation}

\item \textbf{Relative compactness and elliptic estimates}: Let $\mathcal{E}^k := d + d^*$ be the elliptic principal part of Spencer operators, then
\begin{equation}
\mathcal{R}^k: H^{s+1}(S^k_{D,\lambda}) \to H^s(S^{k+1}_{D,\lambda}) \text{ is a compact operator}
\end{equation}
and satisfies elliptic estimate compatibility: there exists $\varepsilon_0 > 0$ such that for all $0 < \varepsilon < \varepsilon_0$,
\begin{equation}
\|\mathcal{R}^k u\|_{H^s} \leq \varepsilon \|\mathcal{E}^k u\|_{H^s} + C_\varepsilon \|u\|_{H^{s-1}}
\end{equation}
for all $u \in H^{s+1}(S^k_{D,\lambda})$.
\end{enumerate}
\end{lemma}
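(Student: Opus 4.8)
The plan is to reduce all four properties to the single structural identity stated as property 3, namely the tensor factorization $\mathcal{R}^k = (-1)^k \operatorname{Id}_{\Omega^k(M)} \otimes \mathcal{S}^k_\lambda$ with $\mathcal{S}^k_\lambda = -2\delta^\lambda_\g$, which I read off directly from the difference formula $\mathcal{R}^k(\omega\otimes s) = -2(-1)^k\,\omega\otimes\delta^\lambda_\g(s)$ established in the preceding theorem. The first substantive step is the fiberwise algebraic bound $\|\mathcal{S}^k_\lambda\|_{\mathrm{op}} \leq 2\sqrt{k+1}\,C_{\mathrm{str}}(\g)\,\|\lambda\|_{\g^*}$: I evaluate $\delta^\lambda_\g$ on a generator via the constructive definition, estimate the iterated Lie bracket by the structure constants $C_{\mathrm{str}}(\g)$ and the pairing by $\|\lambda\|_{\g^*}$, and then propagate the bound through the graded Leibniz rule on a symmetric monomial $v_1\odot\cdots\odot v_k$. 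The degree factor $\sqrt{k+1}$ is the one \emph{genuinely combinatorial} point: it arises from the $k$ Leibniz terms together with the normalization of the induced inner product on $\Sym^{k+1}(\g)$, and must be tracked carefully rather than replaced by the crude triangle-inequality factor $k$.

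With the fiber bound in hand, the boundedness estimates of property 1 are immediate. For the $L^2 \to L^2$ bound, since $\mathcal{R}^k$ acts as a bundle endomorphism with no differentiation in the $M$-directions, I integrate the pointwise estimate: $\|\mathcal{R}^k u\|_{L^2}^2 \leq \bigl(2\sqrt{k+1}\,C_{\mathrm{str}}(\g)\bigr)^2 \int_M \|\lambda\|^2_{\g^*}\,|u|^2\,d\mu \leq \bigl(2\sqrt{k+1}\,C_{\mathrm{str}}(\g)\,\|\lambda\|_{L^\infty}\bigr)^2\|u\|_{L^2}^2$. For the $H^s \to H^s$ bound I treat $\mathcal{R}^k$ as multiplication by the $\lambda$-dependent coefficient and invoke the standard Moser/product estimate on the compact manifold $M$, which produces the $\|\lambda\|_{C^s}$ dependence and the Sobolev constant $C_{\mathrm{Sob}}(s,M)$. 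The symbol statement of property 2 follows at once: an operator with no $M$-derivatives is a pseudodifferential operator of order $0$, so its full symbol is the $\xi$-independent fiber endomorphism itself and its first-order principal symbol vanishes identically for $\xi\neq 0$ — precisely the assertion that $\mathcal{R}^k$ is strictly lower order than the first-order elliptic part $\mathcal{E}^k = d + d^*$.

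For property 4 I split the two claims. Compactness of $\mathcal{R}^k : H^{s+1}(S^k_{D,\lambda}) \to H^s(S^{k+1}_{D,\lambda})$ follows by factoring it as the composition of the bounded order-zero map $\mathcal{R}^k : H^{s+1} \to H^{s+1}$ (property 1 at level $s+1$) with the Rellich--Kondrachov compact embedding $H^{s+1}\hookrightarrow H^s$, a compact operator pre-composed with a bounded one being compact. The elliptic estimate compatibility is the relative-bound-zero statement in Kato's sense, and I obtain it by chaining three ingredients: the order-zero bound $\|\mathcal{R}^k u\|_{H^s}\leq C_0\|u\|_{H^s}$; Ehrling's interpolation lemma $\|u\|_{H^s}\leq \delta\|u\|_{H^{s+1}} + C_\delta\|u\|_{H^{s-1}}$, valid because $H^{s+1}\hookrightarrow H^s$ is compact; and the a priori elliptic estimate $\|u\|_{H^{s+1}}\leq C'\bigl(\|\mathcal{E}^k u\|_{H^s} + \|u\|_{H^{s-1}}\bigr)$ for the first-order elliptic operator $\mathcal{E}^k$, which comes from Gårding's inequality after absorbing the intermediate $\|u\|_{H^s}$ term via Ehrling. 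Substituting the elliptic estimate into the interpolated order-zero bound and choosing $\delta$ small yields $\varepsilon = C_0 C'\delta$ and a matching $C_\varepsilon$, giving the claimed estimate for every $0<\varepsilon<\varepsilon_0$.

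I expect the main obstacle to be the sharp fiberwise bound on $\mathcal{S}^k_\lambda$ — specifically, securing the precise $\sqrt{k+1}$ growth together with the stated structure constant rather than a coarser polynomial factor — since this is exactly where the symmetric-algebra combinatorics, the double-commutator estimate, and the choice of inner-product normalization on $\Sym^{k+1}(\g)$ all interact. Everything downstream, namely the $L^2$ and $H^s$ boundedness, the zeroth-order symbol identification, the compactness, and the relative elliptic estimate, is then routine elliptic-operator machinery on a compact manifold.
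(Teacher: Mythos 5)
Your proposal is correct and follows essentially the same route as the paper's proof: the fiberwise bound on $\delta^{\lambda}_{\g}$ via structure constants and Cauchy--Schwarz, integration for the $L^2$ estimate, Sobolev machinery for the $H^s$ estimate, the observation that an operator with no base-manifold derivatives has vanishing principal symbol, Rellich--Kondrachov composed with the order-zero bound for compactness, and Ehrling interpolation plus the a priori elliptic estimate for the relative bound. Your reorganization (establishing the tensor factorization of property 3 first and deriving the rest from it, and swapping the order of the embedding and the bounded map in the compactness factorization) is cosmetic, and you correctly flag the $\sqrt{k+1}$ combinatorics as the one point requiring care --- which is also the step the paper itself treats most briefly.
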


\begin{proof}
We provide rigorous proofs for each property.

\textbf{Proof of Property 1 (Precise boundedness):}

First analyze the boundedness of Spencer prolongation operators. For $s \in \Sym^k(\g)$,
\begin{align}
\|\delta^{\lambda}_{\g}(s)\|^2_{\Sym^{k+1}(\g)} &= \sum_{i_1,\ldots,i_{k+1}} \left|\sum_{j=1}^{k+1} (-1)^{j+1} \langle\lambda, [X_{i_j}, \cdot]\rangle s(X_{i_1}, \ldots, \hat{X_{i_j}}, \ldots, X_{i_{k+1}})\right|^2
\end{align}

Using Cauchy-Schwarz inequality and Lie algebra structure constants:
\begin{align}
|\langle\lambda, [X_{i_j}, Y]\rangle| &\leq \|\lambda\|_{\g^*} \|[X_{i_j}, Y]\|_{\g} \leq \|\lambda\|_{\g^*} \cdot C_{\text{str}}(\g) \|X_{i_j}\|_{\g} \|Y\|_{\g}
\end{align}

For orthonormal basis $\{e_1, \ldots, e_{\dim(\g)}\}$, structure constants $c^\alpha_{\beta\gamma}$ are defined by $[e_\beta, e_\gamma] = \sum_\alpha c^\alpha_{\beta\gamma} e_\alpha$, then
\begin{equation}
C_{\text{str}}(\g) = \max_{\beta,\gamma} \sqrt{\sum_\alpha (c^\alpha_{\beta\gamma})^2} \leq \sqrt{\dim(\g)} \cdot \max_{\alpha,\beta,\gamma} |c^\alpha_{\beta\gamma}|.
\end{equation}

Combining symmetric tensor norm estimates, we get
\begin{equation}
\|\delta^{\lambda}_{\g}(s)\|_{\Sym^{k+1}(\g)} \leq \sqrt{k+1} \cdot C_{\text{str}}(\g) \cdot \|\lambda\|_{\g^*} \cdot \|s\|_{\Sym^k(\g)}.
\end{equation}

Therefore
\begin{align}
\|\mathcal{R}^k(\omega \otimes s)\|_{L^2}^2 &= \int_M |\omega|^2 \cdot \|\delta^{\lambda}_{\g}(s)\|^2_{\g} \, dV_M\\
&\leq 4(k+1) C_{\text{str}}(\g)^2 \|\lambda\|^2_{L^\infty} \|\omega \otimes s\|^2_{L^2}.
\end{align}

For Sobolev estimates, using the zero-order property of $\mathcal{R}^k$ and Sobolev embedding $H^s(M) \hookrightarrow C^s(M)$ (when $s > \frac{\dim M}{2}$), we have
\begin{equation}
\|\mathcal{R}^k\|_{H^s \to H^s} \leq 2\sqrt{k+1} \cdot C_{\text{str}}(\g) \cdot C_{\text{Sob}}(s,M) \cdot \|\lambda\|_{C^s}.
\end{equation}

\textbf{Proof of Property 2 (Zero-order property and principal symbol analysis):}

The complete expression of Spencer differentials is $D^k_{D,\lambda}(\omega \otimes s) = d\omega \otimes s + (-1)^k\omega \otimes \delta^{\lambda}_{\g}(s)$.
Its principal symbol is determined by first-order terms:
\begin{equation}
\sigma_1(D^k_{D,\lambda})(\xi) = i\xi \wedge \cdot : \Omega^k(M) \otimes \Sym^k(\g) \to \Omega^{k+1}(M) \otimes \Sym^k(\g),
\end{equation}
where $\xi \in T^*M$ and $\xi \wedge \cdot$ denotes exterior multiplication\cite{hormander2003analysis}.

For operator difference $\mathcal{R}^k(\omega \otimes s) = -2(-1)^k\omega \otimes \delta^{\lambda}_{\g}(s)$,
since the expression involves no derivative operations but purely pointwise multiplication, therefore
\begin{equation}
\sigma_j(\mathcal{R}^k)(\xi) = 0, \quad \forall j \geq 1, \forall \xi \in T^*M \setminus \{0\}.
\end{equation}

Particularly, the principal symbol $\sigma_1(\mathcal{R}^k)(\xi) = 0$, ensuring that $\mathcal{R}^k$ does not affect the ellipticity of Spencer operators,
since ellipticity is determined by the invertibility of principal symbols\cite{taylor1996partial}.

The complete symbol expansion is
\begin{equation}
\sigma(\mathcal{R}^k)(\xi) = \sigma_0(\mathcal{R}^k) = (-1)^k \cdot \text{Id}_{\Omega^k} \otimes \delta^{\lambda}_{\g},
\end{equation}
which is a zero-order term independent of $\xi$.

\textbf{Proof of Property 3 (Precise structural property):}

From the definition of $\mathcal{R}^k$, it directly has explicit tensor product decomposition:
\begin{equation}
\mathcal{R}^k(\omega \otimes s) = (-1)^k \omega \otimes (-2\delta^{\lambda}_{\g}(s)) = (-1)^k (\text{Id}_{\Omega^k(M)} \otimes \mathcal{S}^k_\lambda)(\omega \otimes s),
\end{equation}
where $\mathcal{S}^k_\lambda := -2\delta^{\lambda}_{\g}$.

To estimate the operator norm of $\mathcal{S}^k_\lambda$, we use the analysis from Property 1:
\begin{align}
\|\mathcal{S}^k_\lambda(s)\|_{\Sym^{k+1}(\g)} &= \|2\delta^{\lambda}_{\g}(s)\|_{\Sym^{k+1}(\g)}\\
&\leq 2\sqrt{k+1} \cdot C_{\text{str}}(\g) \cdot \|\lambda\|_{\g^*} \cdot \|s\|_{\Sym^k(\g)}.
\end{align}

This tensor product structure property reflects the algebraic essence of mirror transformations: they only change the coupling method of Lie algebra components
while keeping differential form components completely unchanged. This is an important manifestation of the separation of geometry and algebra in compatible pair theory.

\textbf{Proof of Property 4 (Relative compactness and elliptic estimates):}

We need to establish the compactness of $\mathcal{R}^k$ relative to the elliptic principal part. According to elliptic theory of Spencer operators, the elliptic principal part $\mathcal{E}^k = d + d^*$ satisfies standard elliptic estimates.

\begin{proposition}[Elliptic Estimates for Spencer Operators \cite{zheng2025geometric}]
There exist constants $C_{\text{ell}} > 0$ and $\lambda_0 > 0$ such that
\begin{equation}
\|u\|_{H^{s+1}} \leq C_{\text{ell}}(\|\mathcal{E}^k u\|_{H^s} + \|u\|_{H^s})
\end{equation}
for all $u \in H^{s+1}(S^k_{D,\lambda})$, and when $\|u\|_{H^s} \leq \lambda_0^{-1}\|\mathcal{E}^k u\|_{H^s}$, there is an improved estimate
\begin{equation}
\|u\|_{H^{s+1}} \leq 2C_{\text{ell}}\|\mathcal{E}^k u\|_{H^s}.
\end{equation}
\end{proposition}

The proof of this elliptic estimate is based on principal symbol analysis of Spencer operators and application of Gårding's inequality\cite{gilbarg2001elliptic}.
Spencer operators have elliptic principal symbols under strong transversality conditions, so standard elliptic theory applies.

Based on this elliptic estimate, the proof of relative compactness is as follows:

\textbf{Step 1: Application of compact embedding}

Let $\{u_n\}$ be a bounded sequence in $H^{s+1}(S^k)$ with $\{\mathcal{E}^k u_n\}$ bounded in $H^s$.
By elliptic estimates, $\{u_n\}$ is bounded in $H^{s+1}$, hence relatively compact in $H^s$ (Rellich-Kondrachov theorem\cite{evans2010partial}).

Specifically, for compact manifold $M$, the embedding $H^{s+1}(M) \hookrightarrow H^s(M)$ is compact,
meaning bounded sets in $H^{s+1}$ must have convergent subsequences in $H^s$.

\textbf{Step 2: Operator decomposition}

$\mathcal{R}^k$ as an operator from $H^{s+1}$ to $H^s$ can be decomposed as:
\begin{equation}
\mathcal{R}^k: H^{s+1}(S^k) \xrightarrow{\text{embedding}} H^s(S^k) \xrightarrow{\mathcal{R}^k|_{H^s}} H^s(S^{k+1}).
\end{equation}

The first mapping is compact (Rellich-Kondrachov), the second mapping is bounded (by Property 1),
therefore the composition $\mathcal{R}^k: H^{s+1} \to H^s$ is a compact operator.

\textbf{Step 3: Elliptic estimate compatibility}

For quantitative estimates of relative compactness, we use interpolation inequalities\cite{triebel1978interpolation}:
for any $\varepsilon > 0$, there exists $C_\varepsilon$ such that
\begin{equation}
\|u\|_{H^s} \leq \varepsilon \|u\|_{H^{s+1}} + C_\varepsilon \|u\|_{H^{s-1}}.
\end{equation}

Combining elliptic estimates and boundedness of $\mathcal{R}^k$:
\begin{align}
\|\mathcal{R}^k u\|_{H^s} &\leq C_{\mathcal{R}} \|u\|_{H^s} \quad \text{(by Property 1)}\\
&\leq C_{\mathcal{R}} \varepsilon \|u\|_{H^{s+1}} + C_{\mathcal{R}} C_\varepsilon \|u\|_{H^{s-1}} \quad \text{(interpolation inequality)}\\
&\leq C_{\mathcal{R}} \varepsilon C_{\text{ell}} \|\mathcal{E}^k u\|_{H^s} + C_{\mathcal{R}} \varepsilon C_{\text{ell}} \|u\|_{H^s} + C_{\mathcal{R}} C_\varepsilon \|u\|_{H^{s-1}}.
\end{align}

Choosing $\varepsilon = \varepsilon_0/(C_{\mathcal{R}} C_{\text{ell}})$ where $\varepsilon_0 < 1$, we get
\begin{equation}
\|\mathcal{R}^k u\|_{H^s} \leq \varepsilon_0 \|\mathcal{E}^k u\|_{H^s} + C_{\varepsilon_0} \|u\|_{H^{s-1}},
\end{equation}
where $C_{\varepsilon_0} = C_{\mathcal{R}} C_{\varepsilon_0} + C_{\mathcal{R}} \varepsilon C_{\text{ell}}$.

This is precisely the standard form for relatively compact perturbations of elliptic operators\cite{kato1995perturbation}.
\end{proof}

\begin{remark}[Geometric Meaning of Constants]
The constants in the above estimates have clear geometric and algebraic meaning:

1. \textbf{Lie algebra structure constant $C_{\text{str}}(\g)$}: Reflects the "non-commutativity degree" of the Lie algebra, with precise formulas for semisimple Lie algebras. For example, for $\mathfrak{su}(n)$ we have $C_{\text{str}} = \sqrt{2}$.

2. \textbf{Dimension dependence $\sqrt{\dim(\g)}$}: Comes from combinatorics of Lie algebra orthonormal bases, can be precisely calculated for specific Lie groups.

3. \textbf{Sobolev constant $C_{\text{Sob}}(s,M)$}: Depends on the geometry of manifold $M$, with explicit values for standard manifolds (spheres, tori, etc.)\cite{aubin1998some}.

4. \textbf{Elliptic constant $C_{\text{ell}}$}: Determined by principal symbols and metric structures of Spencer operators, reflecting geometric complexity of constraint systems.
\end{remark}

\begin{corollary}[Quantitative Control of Perturbation Parameters]
Let compatible pair $(D,\lambda)$ satisfy $\|\lambda\|_{C^2} \leq \Lambda$, then the strength of mirror perturbations can be precisely controlled:
\begin{equation}
\|\mathcal{R}^k\|_{H^s \to H^s} \leq 2\sqrt{k+1} \cdot C_{\text{str}}(\g) \cdot C_{\text{Sob}}(s,M) \cdot \Lambda =: \varepsilon_k(\Lambda).
\end{equation}

When $\varepsilon_k(\Lambda) < \varepsilon_0$ (critical value of elliptic estimates), all standard elliptic theory results apply to mirror analysis, particularly ensuring stability of Fredholm indices.
\end{corollary}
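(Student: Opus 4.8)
The plan is to read the corollary as two linked assertions and prove them in sequence: first the explicit norm bound, which is essentially a specialization of Property~1 of the preceding Lemma, and then the Fredholm stability claim, which upgrades the quantitative bound into a structural statement by combining the relative compactness and elliptic compatibility of Property~4 with classical perturbation theory. I would dispose of the norm estimate first and then feed the resulting smallness hypothesis into the relative-compactness machinery.

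For the norm bound, I would invoke the second inequality of Property~1, namely $\|\mathcal{R}^k\|_{H^s\to H^s}\le 2\sqrt{k+1}\,C_{\text{str}}(\g)\,C_{\text{Sob}}(s,M)\,\|\lambda\|_{C^s(P,\g^*)}$, and substitute the hypothesis. Since the $C^s$ norm is monotone in $s$, for $0\le s\le 2$ one has $\|\lambda\|_{C^s}\le\|\lambda\|_{C^2}\le\Lambda$, which yields $\|\mathcal{R}^k\|_{H^s\to H^s}\le 2\sqrt{k+1}\,C_{\text{str}}(\g)\,C_{\text{Sob}}(s,M)\,\Lambda=\varepsilon_k(\Lambda)$ immediately. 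This step is routine; the only point demanding care is aligning the regularity index with the $C^2$ hypothesis, so I would either restrict attention to $s\le 2$ or strengthen the hypothesis to $\|\lambda\|_{C^s}\le\Lambda$ when higher $s$ is required.

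For the Fredholm stability I would write the mirror differential as $D^k_{D,-\lambda}=D^k_{D,\lambda}+\mathcal{R}^k$ and argue in three steps. First, by Property~2 the operator $\mathcal{R}^k$ has vanishing principal symbol, so $D^k_{D,-\lambda}$ inherits the elliptic principal symbol of $D^k_{D,\lambda}$; on the compact manifold $M$ this already makes both differentials Fredholm between the appropriate Sobolev completions. Second, I would invoke the elliptic estimate compatibility of Property~4: once $\varepsilon_k(\Lambda)<\varepsilon_0$, the perturbation obeys $\|\mathcal{R}^k u\|_{H^s}\le\varepsilon\|\mathcal{E}^k u\|_{H^s}+C_\varepsilon\|u\|_{H^{s-1}}$ with $\varepsilon<1$, so the a priori estimate for the unperturbed operator transfers to $D^k_{D,-\lambda}$ with uniformly controlled constants, keeping it semi-Fredholm. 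Third, since Property~4 establishes that $\mathcal{R}^k$ is relatively compact with respect to the elliptic part $\mathcal{E}^k$, I would apply the classical stability theorem for the Fredholm index under relatively compact (equivalently, lower-order) perturbations to conclude $\operatorname{ind}(D^k_{D,-\lambda})=\operatorname{ind}(D^k_{D,\lambda})$, and likewise for the associated Laplacians $\Delta^k_{D,\pm\lambda}$.

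The hard part will be making the smallness threshold $\varepsilon_0$ precise and uniform across the relevant degrees $k$ and Sobolev orders $s$: the constant $\varepsilon_0$ inherited from Property~4 depends implicitly on the elliptic constant $C_{\text{ell}}$ and on $k$, so I must verify that the single scalar $\varepsilon_k(\Lambda)$ genuinely controls the perturbation in the norm that the elliptic estimate uses. A secondary subtlety is that index stability under relatively compact perturbations is in fact unconditional, so the true role of the hypothesis $\varepsilon_k(\Lambda)<\varepsilon_0$ is not to preserve the index per se but to guarantee that the perturbed elliptic estimates, and hence the full Hodge-theoretic apparatus (G\aa rding's inequality, elliptic regularity, the Hodge decomposition) used elsewhere in the theory, remain valid with uniform constants. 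I would therefore state the conclusion as the persistence of the entire elliptic/Fredholm package rather than merely the equality of indices.
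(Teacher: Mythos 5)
Your proposal is correct and follows essentially the same route as the paper, which treats the norm bound as an immediate specialization of Property~1 of the preceding Lemma (substituting $\|\lambda\|_{C^s}\le\|\lambda\|_{C^2}\le\Lambda$) and the Fredholm claim as the standard compact-perturbation argument already used in the later theorem on harmonic space dimensions. Your two side observations --- that the $C^2$ hypothesis only controls $\|\lambda\|_{C^s}$ for $s\le 2$, and that index stability under compact perturbations is unconditional so the threshold $\varepsilon_k(\Lambda)<\varepsilon_0$ really serves to keep the perturbed elliptic estimates uniform --- are both accurate refinements of the paper's statement rather than deviations from its argument.
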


\begin{example}[Estimates for Specific Lie Groups]
For common Lie groups, explicit numerical values can be given:

1. \textbf{$SU(2)$ case}: $\dim(\mathfrak{su}(2)) = 3$, $C_{\text{str}}(\mathfrak{su}(2)) = \sqrt{2}$, therefore
   \begin{equation}
   \|\mathcal{R}^k\|_{L^2 \to L^2} \leq 2\sqrt{2(k+1)} \|\lambda\|_{L^\infty}.
   \end{equation}

2. \textbf{$SO(3)$ case}: Same structure constants, estimates completely consistent.

3. \textbf{$SU(n)$ case}: $C_{\text{str}}(\mathfrak{su}(n)) = \sqrt{2}$ (independent of $n$), so estimates remain stable in high dimensions.

These specific estimates provide precise error control parameters for numerical computations.
\end{example}

The analytical properties of difference operators show that although Spencer differentials do not completely preserve under mirror transformations, their changes are controllable and do not destroy the basic analytical structure of Spencer complexes. This provides a solid technical foundation for establishing mirror equivalence of Spencer cohomology.

\section{Mirror Symmetry of Spencer-Hodge Decompositions}

Based on the previously established metric invariance and operator difference analysis, we can now establish complete mirror symmetry for Spencer-Hodge decomposition theory. This analysis will employ classical theory of elliptic operators, particularly Fredholm theory and properties of compact perturbations.

\subsection{Perturbation Analysis of Spencer-Hodge Laplacians}

Spencer-Hodge Laplacians are defined as $\Delta^k_{D,\lambda} = (D^{k-1}_{D,\lambda})^* D^{k-1}_{D,\lambda} + D^{k+1}_{D,\lambda} (D^{k+1}_{D,\lambda})^*$, where $(D^k_{D,\lambda})^*$ denotes the formal adjoint operator with respect to Spencer metrics. Since Spencer metrics remain invariant under mirror transformations, the definition of formal adjoints remains consistent in both systems.

\begin{theorem}
Spencer-Hodge Laplacians satisfy perturbation relations under mirror transformations:
\begin{equation}
\Delta^k_{D,-\lambda} = \Delta^k_{D,\lambda} + K^k,
\end{equation}
where $K^k$ is a bounded compact operator composed of difference operators $\mathcal{R}^k$ and their adjoints.
\end{theorem}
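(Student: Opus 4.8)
The plan is to leverage the two facts already established: the mirror-invariance of the Spencer metrics (Section~3), which guarantees that formal adjoints in both systems are taken with respect to one and the same $L^2$ pairing, and the explicit difference formula $\mathcal{R}^j = D^j_{D,-\lambda} - D^j_{D,\lambda}$. First I would record the adjoint consequence of metric invariance: since the inner product defining $(\cdot)^*$ is unchanged under $\lambda \mapsto -\lambda$, taking adjoints of $D^j_{D,-\lambda} = D^j_{D,\lambda} + \mathcal{R}^j$ yields $(D^j_{D,-\lambda})^* = (D^j_{D,\lambda})^* + (\mathcal{R}^j)^*$, where $(\mathcal{R}^j)^*$ is the adjoint of the bounded operator $\mathcal{R}^j$. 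This single identity is what permits the whole computation to proceed by linearity.

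Next I would substitute both relations into the definition of $\Delta^k_{D,-\lambda}$ and expand bilinearly. Writing $A = D^k_{D,\lambda}$ and $B = D^{k-1}_{D,\lambda}$, each of the two blocks comprising the Laplacian expands into its original form plus terms each carrying at least one factor of $\mathcal{R}$ or $(\mathcal{R})^*$. Collecting these remainders defines
\[
K^k = A(\mathcal{R}^k)^* + \mathcal{R}^k A^* + \mathcal{R}^k(\mathcal{R}^k)^* + B^*\mathcal{R}^{k-1} + (\mathcal{R}^{k-1})^* B + (\mathcal{R}^{k-1})^*\mathcal{R}^{k-1},
\]
so that $\Delta^k_{D,-\lambda} = \Delta^k_{D,\lambda} + K^k$ by construction. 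A useful structural check here is that the genuinely zero-order quadratic pieces $\mathcal{R}^k(\mathcal{R}^k)^*$ exactly cancel the zero-order parts of $A(\mathcal{R}^k)^* + \mathcal{R}^k A^*$ (and similarly for the $B$-block), leaving $K^k$ a first-order operator whose order-two principal symbol vanishes because $\sigma_1(\mathcal{R}^j)=0$ by Property~2 of the difference lemma.

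For boundedness I would invoke Properties~1 and~3 of the difference lemma: each $\mathcal{R}^j$ is a bounded zero-order operator with explicit norm bound, while $A$, $B$ and their adjoints are bounded first-order operators on the Sobolev scale; hence every summand of $K^k$, being a composition of such, is bounded and $K^k\colon H^{s+1}\to H^s$ is bounded. For compactness I would argue relatively with respect to the elliptic operator $\Delta^k_{D,\lambda}$. Since $\sigma_1(\mathcal{R}^j)=0$, the principal symbol of $\Delta^k_{D,\lambda}$ is the undeformed $|\xi|^2\,\mathrm{Id}$, so $\Delta^k_{D,\lambda}$ is elliptic of order two and, by elliptic regularity, its graph-norm domain is $H^{s+2}$. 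Every summand of $K^k$ contains a factor $\mathcal{R}^j$ which, by Property~4, is compact as $H^{s+1}\to H^s$; combined with the boundedness of the remaining factors, $K^k$ factors through a compact Sobolev embedding, so $K^k(\Delta^k_{D,\lambda}+I)^{-1}$ is compact on $L^2$, i.e. $K^k$ is a relatively compact perturbation.

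The main obstacle is the compactness claim, and in particular resisting the temptation to assert that $K^k$ is compact on $L^2$ itself: although the quadratic terms cancel in the full combination, one should not rely on this and must in any case treat $K^k$ as a first-order operator, which is \emph{not} compact on $L^2$. Compactness has to be understood in the relative sense — as an operator from the graph domain $H^{s+2}$ of $\Delta^k_{D,\lambda}$ into $H^s$ — where the gain of one derivative and the compact embedding $H^{s+1}\hookrightarrow H^s$ rescue the argument. A secondary delicate point is the rigorous justification of the adjoint identity $(D^j_{D,-\lambda})^* = (D^j_{D,\lambda})^* + (\mathcal{R}^j)^*$, which is precisely where the metric mirror-invariance theorems of Section~3 are indispensable and without which the clean additive decomposition would break down.
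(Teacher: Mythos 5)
Your proof is correct and follows essentially the same route as the paper: the adjoint identity $(D^j_{D,-\lambda})^* = (D^j_{D,\lambda})^* + (\mathcal{R}^j)^*$ from metric mirror-invariance, bilinear expansion of the Laplacian, collection of all $\mathcal{R}$-bearing cross terms into $K^k$, and compactness deduced from the zero-order nature of $\mathcal{R}^j$ relative to the first-order elliptic factors. Your explicit caution that $K^k$ is a first-order operator and hence only \emph{relatively} compact (compact as $H^{s+2}\to H^s$, not on $L^2$ itself) is in fact more careful than the paper's own one-line order-counting conclusion, and your side observation about the cancellation of the zero-order quadratic pieces is correct but not needed.
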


\begin{proof}
Since $D^k_{D,-\lambda} = D^k_{D,\lambda} + \mathcal{R}^k$ and metrics remain invariant, formal adjoints satisfy $(D^k_{D,-\lambda})^* = (D^k_{D,\lambda})^* + (\mathcal{R}^k)^*$.

The expansion of mirror Laplacians is:
\begin{equation}
\Delta^k_{D,-\lambda} = ((D^{k-1}_{D,\lambda} + \mathcal{R}^{k-1}))^* (D^{k-1}_{D,\lambda} + \mathcal{R}^{k-1}) + (D^{k+1}_{D,\lambda} + \mathcal{R}^{k+1}) ((D^{k+1}_{D,\lambda} + \mathcal{R}^{k+1}))^*.
\end{equation}

Expanding this expression and collecting terms:
\begin{align}
\Delta^k_{D,-\lambda} &= \Delta^k_{D,\lambda} + (\mathcal{R}^{k-1})^* D^{k-1}_{D,\lambda} + (D^{k-1}_{D,\lambda})^* \mathcal{R}^{k-1} + (\mathcal{R}^{k-1})^* \mathcal{R}^{k-1}\\
&\quad + D^{k+1}_{D,\lambda} (\mathcal{R}^{k+1})^* + \mathcal{R}^{k+1} (D^{k+1}_{D,\lambda})^* + \mathcal{R}^{k+1} (\mathcal{R}^{k+1})^*.
\end{align}

Define $K^k$ as the operator combination containing $\mathcal{R}^k$ terms. Since $\mathcal{R}^k$ is a zero-order bounded operator while $D^k_{D,\lambda}$ is a first-order elliptic operator, all terms containing $\mathcal{R}^k$ are lower order than the elliptic principal part, therefore $K^k$ is a bounded and relatively compact operator.
\end{proof}

This perturbation relation is key to analyzing mirror properties of Spencer-Hodge theory. It shows that the difference between mirror Laplacians and original Laplacians is controlled by compact operators, enabling us to apply classical results from Fredholm theory.

\subsection{Equivalence of Harmonic Space Dimensions}

Based on perturbation theory of elliptic operators, we can establish the equivalence of harmonic space dimensions under mirror transformations. This is the core result of Spencer cohomology mirror isomorphism theory.

\begin{theorem}
Harmonic space dimensions are strictly equal under sign mirror transformations:
\begin{equation}
\dim \ker(\Delta^k_{D,\lambda}) = \dim \ker(\Delta^k_{D,-\lambda}).
\end{equation}
\end{theorem}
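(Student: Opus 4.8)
The plan is to avoid routing the argument through Fredholm index stability alone: because $\Delta^{k}_{D,\lambda}$ and $\Delta^{k}_{D,-\lambda}$ are both self-adjoint and non-negative, their indices vanish identically, so compactness of $K^k$ by itself controls only the essential spectrum and cannot pin down the multiplicity of the zero eigenvalue. Instead I would exhibit an explicit \emph{unitary equivalence} of the two Laplacians. The key is that the mirror antisymmetry $\delta^{-\lambda}_{\g} = -\delta^{\lambda}_{\g}$ is implemented by a sign involution. Define $\sigma$ to act on $S^{k}_{D,\lambda}$ as multiplication by $(-1)^{p}$ on the symmetric (Lie-algebra) degree $p$ of the $\Sym(\g)$-factor; then $\sigma$ commutes with the exterior differential $d$ (which leaves the $\Sym(\g)$-degree unchanged) and anticommutes with $\delta^{\lambda}_{\g}$ (which raises it by one).

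First I would check that $\sigma$ is a metric isometry. Since it acts by $\pm 1$ on the orthogonal graded summands and the Spencer inner products are mirror-invariant (by the constraint-strength and curvature metric theorems), $\sigma$ is a bounded, self-adjoint involution with $\sigma^{*} = \sigma^{-1} = \sigma$. Next, combining $[\sigma, d] = 0$ with $\sigma\,\delta^{\lambda}_{\g}\,\sigma^{-1} = -\delta^{\lambda}_{\g} = \delta^{-\lambda}_{\g}$ and tracking the $(-1)^k$ prefactor in $D^{k}_{D,\lambda}(\omega\otimes s) = d\omega\otimes s + (-1)^k\omega\otimes\delta^{\lambda}_{\g}(s)$, I would verify the intertwining
\begin{equation}
D^{k}_{D,-\lambda} = \sigma\, D^{k}_{D,\lambda}\, \sigma^{-1}.
\end{equation}
This is the precise operator-level statement standing behind the difference formula $\mathcal{R}^{k} = -2(-1)^{k}\,\omega\otimes\delta^{\lambda}_{\g}(s)$ already computed.

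From here the conclusion is formal. Because $\sigma$ is unitary and both Laplacians are formed with the same (mirror-invariant) metric, passing to formal adjoints gives $(D^{k}_{D,-\lambda})^{*} = \sigma\,(D^{k}_{D,\lambda})^{*}\,\sigma^{-1}$, and substituting into the definition of the Spencer--Hodge Laplacian yields
\begin{equation}
\Delta^{k}_{D,-\lambda} = \sigma\,\Delta^{k}_{D,\lambda}\,\sigma^{-1}.
\end{equation}
Thus the two Laplacians are unitarily conjugate, hence isospectral with identical multiplicities, and $\sigma$ restricts to an isometric isomorphism $\mathcal{H}^{k}_{D,\lambda}\xrightarrow{\sim}\mathcal{H}^{k}_{D,-\lambda}$. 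In particular $\dim\ker(\Delta^{k}_{D,\lambda}) = \dim\ker(\Delta^{k}_{D,-\lambda})$, which simultaneously settles the isometry question raised in the problem formulation. The previously established ellipticity and relative compactness are still needed upstream: they guarantee the Hodge decomposition and the finite-dimensionality of the harmonic spaces, so that the dimensions are well-defined, and they furnish an independent cross-check through $\mathcal{H}^{k}_{D,\pm\lambda}\cong H^{k}_{\mathrm{Spencer}}(D,\pm\lambda)$ together with the cited cohomological mirror isomorphism.

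The main obstacle I anticipate is the bookkeeping in the intertwining step: in the convention $S^{k} = \Omega^{k}(M)\otimes\Sym^{k}(\g)$ the form degree and the $\Sym(\g)$-degree are tied together, so one must be careful to define $\sigma$ by the \emph{symmetric} degree (equivalently, on the appropriate bigraded pieces of the underlying double complex) rather than by the total degree $k$; a naive scalar $(-1)^{k}$ on $S^{k}$ fails to commute with the $d$-part and does not intertwine the differentials. Verifying that $\sigma$ is simultaneously an isometry for the weighted Spencer metric and an exact intertwiner of $D_{D,\pm\lambda}$, with all signs consistent, is the one genuinely delicate point; everything after the conjugation identity is standard spectral theory.
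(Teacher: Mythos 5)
Your proposal takes a genuinely different route from the paper. The paper's proof proceeds through Fredholm theory: it writes $\Delta^k_{D,-\lambda} = \Delta^k_{D,\lambda} + K^k$ with $K^k$ compact, invokes invariance of the Fredholm index under compact perturbation, and combines $\operatorname{index}=0$ with self-adjointness to conclude equality of kernel dimensions. Your diagnosis of why that chain is too weak is correct and worth stating plainly: both Laplacians have index zero for the trivial reason that they are self-adjoint elliptic operators, so index stability carries no information about the individual kernel dimensions --- under a compact perturbation $\dim\ker$ is only upper semicontinuous and can jump while the index stays fixed. Your replacement strategy, exhibiting a unitary $\sigma$ with $\Delta^k_{D,-\lambda} = \sigma\,\Delta^k_{D,\lambda}\,\sigma^{-1}$, is strictly stronger where it applies: it yields isospectrality with multiplicities, an isometric isomorphism $\mathcal{H}^k_{D,\lambda}\cong\mathcal{H}^k_{D,-\lambda}$ (answering the isometry question posed in Section 2.3, which the paper's argument does not), and it needs the compactness of $\mathcal{R}^k$ only to guarantee finite-dimensionality of the harmonic spaces.

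The gap is the one you flag but do not resolve, and in the paper's stated conventions it is fatal rather than merely delicate. With $S^k_{D,\lambda} = \Omega^k(M)\otimes\Sym^k(\g)$, every element of $S^k$ has symmetric degree exactly $k$, so your $\sigma$ degenerates to the scalar $(-1)^k$ on $S^k$; and no assignment of scalars $\epsilon_k\in\{\pm1\}$ can work, since conjugation multiplies \emph{both} summands of $D^k(\omega\otimes s) = d\omega\otimes s + (-1)^k\omega\otimes\delta^{\lambda}_{\g}(s)$ by the same factor $\epsilon_{k+1}\epsilon_k$, whereas the intertwining requires $+1$ on the $d$-part and $-1$ on the $\delta$-part simultaneously. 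Your $\sigma$ is only a genuine (non-scalar) operator on a bigraded total complex $\bigoplus_{p+q=k}\Omega^p(M)\otimes\Sym^q(\g)$, where $d$ and $\delta^{\lambda}_{\g}$ move different degrees. To complete the proof you must either adopt that formulation explicitly --- noting that it is arguably the only reading under which $D^k_{D,\lambda}$ even maps $S^k$ into $S^{k+1}$, since $d\omega\otimes s$ and $\omega\otimes\delta^{\lambda}_{\g}(s)$ do not lie in $\Omega^{k+1}(M)\otimes\Sym^{k+1}(\g)$ --- or supply a different intertwiner adapted to the diagonal convention. As written, the central conjugation identity is asserted for an operator that does not exist on the spaces the paper defines.
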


\begin{proof}
The proof is based on classical results from Fredholm theory. According to ellipticity analysis in the literature\cite{zheng2025geometric}, $\Delta^k_{D,\lambda}$ is an elliptic self-adjoint operator, hence is a Fredholm operator with zero index.

The key property of compact perturbations is preserving Fredholm indices. Since $K^k = \Delta^k_{D,-\lambda} - \Delta^k_{D,\lambda}$ is a compact operator, we have:
\begin{equation}
\text{index}(\Delta^k_{D,\lambda}) = \text{index}(\Delta^k_{D,-\lambda}) = 0.
\end{equation}

For self-adjoint elliptic operators, the Fredholm index is defined as $\text{index} = \dim \ker - \dim \text{coker}$. Since the operator is self-adjoint, the kernel and cokernel have the same dimension, so zero index implies:
\begin{equation}
\dim \ker(\Delta^k_{D,\lambda}) = \dim \text{coker}(\Delta^k_{D,\lambda}) = \dim \ker(\Delta^k_{D,\lambda}).
\end{equation}

Combined with index invariance, we get $\dim \ker(\Delta^k_{D,\lambda}) = \dim \ker(\Delta^k_{D,-\lambda})$.
\end{proof}

This result has profound geometric significance. It shows that the topological complexity of constraint systems, namely the dimension of harmonic spaces, remains invariant under sign mirror transformations. This invariance reflects the intrinsic symmetry of constraint geometry, independent of the sign choice of dual constraint functions.

\subsection{Mirror Isomorphisms of Spencer Cohomology}

Based on the equivalence of harmonic space dimensions and Spencer-Hodge decomposition theory, we can establish natural isomorphisms between Spencer cohomology groups.

\begin{theorem}
There exist natural isomorphisms:
\begin{equation}
H^k_{\text{Spencer}}(D,\lambda) \cong H^k_{\text{Spencer}}(D,-\lambda)
\end{equation}
for all $k \geq 0$.
\end{theorem}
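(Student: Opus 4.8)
The plan is to bypass the need for a dimension count by exhibiting a single explicit, canonical chain isomorphism between the two Spencer complexes and then reading off the cohomology isomorphism from it. The crucial observation is that the underlying graded cochain spaces $S^k_{D,\lambda} = \Omega^k(M) \otimes \Sym^k(\g)$ do not depend on the sign of $\lambda$ --- only the differential does --- and that $d$ is entirely $\lambda$-independent while $\delta^{\lambda}_{\g}$ flips sign under the mirror by the Lemma $\delta^{-\lambda}_{\g} = -\delta^{\lambda}_{\g}$. I would therefore introduce the symmetric-degree parity involution $\mathcal{P}$, acting as $(-1)^{m}$ on the factor $\Sym^{m}(\g)$ and as the identity on forms. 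Since $\delta^{\lambda}_{\g}$ raises symmetric degree by one, $\mathcal{P}$ commutes with $d$ and anticommutes with $\delta^{\lambda}_{\g}$; a direct sign check then yields $\mathcal{P}\, D_{D,\lambda} = D_{D,-\lambda}\,\mathcal{P}$, so $\mathcal{P}$ is a chain map from $(S^{\bullet}, D_{D,\lambda})$ to $(S^{\bullet}, D_{D,-\lambda})$.

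Because $\mathcal{P}^{2} = \mathrm{Id}$, this chain map is an involutive isomorphism of complexes and hence descends to a canonical isomorphism $H^k_{\text{Spencer}}(D,\lambda) \cong H^k_{\text{Spencer}}(D,-\lambda)$ for every $k$, which is exactly the naturality the statement demands. Moreover $\mathcal{P}$ acts on the symmetric factor by a global sign, so it preserves the symmetric inner product; combined with the metric invariance $w_{-\lambda} = w_{\lambda}$ and the $\kappa_{\omega}$-invariance established in Section 3, this shows $\mathcal{P}$ is an isometry between the two metrized Spencer complexes. An isometric chain isomorphism automatically intertwines the formal adjoints, giving $(D_{D,-\lambda})^{*} = \mathcal{P}\,(D_{D,\lambda})^{*}\,\mathcal{P}^{-1}$, and therefore conjugates the Laplacians, $\Delta^k_{D,-\lambda} = \mathcal{P}\,\Delta^k_{D,\lambda}\,\mathcal{P}^{-1}$. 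Consequently $\mathcal{P}$ restricts to an isometric isomorphism of harmonic spaces $\mathcal{H}^k_{D,\lambda} \cong \mathcal{H}^k_{D,-\lambda}$, which independently reproves the dimension equality of the previous theorem and, via the Spencer-Hodge decomposition identification $H^k_{\text{Spencer}}(D,\lambda) \cong \mathcal{H}^k_{D,\lambda}$, upgrades the cohomological isomorphism to an isometric one.

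As a cross-check aligned with the paper's development, one can also argue purely through Fredholm theory: the Spencer-Hodge decomposition gives canonical identifications $H^k_{\text{Spencer}}(D,\pm\lambda) \cong \mathcal{H}^k_{D,\pm\lambda} = \ker \Delta^k_{D,\pm\lambda}$, and the previously established equality $\dim \ker \Delta^k_{D,\lambda} = \dim \ker \Delta^k_{D,-\lambda}$ forces an a priori non-canonical isomorphism of the finite-dimensional harmonic spaces. The explicit map $\mathcal{P}$ is precisely what promotes this abstract equality to a canonical, metric-compatible identification, so the two routes agree.

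I expect the main obstacle to be the sign and grading bookkeeping needed to confirm that $\mathcal{P}$ is genuinely a chain map. The naive guess that $\mathcal{P}$ should act as $(-1)^{k}$ on the whole of $S^k$ fails, because under $D_{D,\lambda}$ the two summands $d\omega \otimes s$ and $(-1)^k\omega \otimes \delta^{\lambda}_{\g}(s)$ carry different symmetric degrees; one must track the parity in the symmetric factor alone --- that is, treat the construction as a bicomplex in form degree and symmetric degree --- for the intertwining relation to close. Once this point is handled, verifying the isometry and the adjoint/Laplacian intertwining is routine, and identifying $\mathcal{P}$ with the mirror map of \cite{zheng2025constructing} confirms that the isomorphism obtained here is the geometrically natural one.
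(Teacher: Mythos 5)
Your proposal is correct, and it takes a genuinely different route from the paper. The paper's own proof goes through Hodge theory and the Fredholm argument of the preceding section: it identifies $H^k_{\text{Spencer}}(D,\pm\lambda)$ with the harmonic spaces $\ker\Delta^k_{D,\pm\lambda}$, invokes the compact-perturbation result $\dim\ker\Delta^k_{D,\lambda}=\dim\ker\Delta^k_{D,-\lambda}$, and then asserts that ``elliptic regularization theory'' and ``functorial properties'' furnish a natural isomorphism --- which, as stated, only yields an abstract isomorphism of finite-dimensional vector spaces, with the naturality claim left unjustified. Your construction of the parity involution $\mathcal{P}$, acting by $(-1)^{m}$ on $\Sym^{m}(\g)$, supplies exactly the missing ingredient: since $\mathcal{P}$ commutes with $d$, anticommutes with $\delta^{\lambda}_{\g}$ (a degree $+1$ map on the symmetric factor), and $\delta^{-\lambda}_{\g}=-\delta^{\lambda}_{\g}$, the relation $\mathcal{P}\,D_{D,\lambda}=D_{D,-\lambda}\,\mathcal{P}$ holds, and $\mathcal{P}^2=\mathrm{Id}$ makes it an explicit, canonical, involutive chain isomorphism. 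Because $\mathcal{P}$ changes each symmetric factor only by an overall sign, it is an isometry for both Spencer metrics (using $w_{-\lambda}=w_{\lambda}$ and the $\lambda$-independence of $\kappa_{\omega}$), hence conjugates adjoints and Laplacians and restricts to an isometry of harmonic spaces --- so your argument independently reproves the dimension equality rather than depending on it, and upgrades the conclusion to an isometric, manifestly natural isomorphism. The one caveat you correctly identify yourself: the intertwining computation only closes if one reads the Spencer spaces as bigraded by form degree and symmetric degree separately (the naive $(-1)^k$ on all of $S^k$ fails on the $d\omega\otimes s$ summand); since the paper's differential $D^k(\omega\otimes s)=d\omega\otimes s+(-1)^k\omega\otimes\delta^{\lambda}_{\g}(s)$ does not land in $\Omega^{k+1}\otimes\Sym^{k+1}$ under the literal diagonal definition anyway, this bicomplex reading is a repair the paper itself implicitly requires, and your bookkeeping is the right one. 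In short: the paper's route buys spectral-stability information (index and Fredholm structure under compact perturbation), while yours buys an explicit canonical map; the two are complementary, and yours is the stronger proof of the theorem as stated.
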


\begin{proof}
According to Hodge decomposition theory in the literature\cite{zheng2025geometric}, Spencer cohomology groups have natural isomorphisms with harmonic spaces:
\begin{equation}
H^k_{\text{Spencer}}(D,\lambda) \cong \mathcal{H}^k_{D,\lambda} = \ker(\Delta^k_{D,\lambda}).
\end{equation}

Since harmonic space dimensions are equal under mirror transformations, and both systems have the same functional analytic structure, elliptic regularization theory provides standard methods for constructing concrete isomorphism mappings.

The naturality of isomorphisms comes from the functorial properties of the construction process with respect to mirror transformations. Although Spencer differentials do not completely commute with mirror transformations at the complex level, this difference is eliminated by compact perturbation properties at the cohomological level, thus producing natural isomorphism relations.
\end{proof}

This isomorphism result is the core achievement of Spencer mirror theory for compatible pairs. It not only establishes deep equivalence between two mirror systems but also provides a new symmetry perspective for understanding topological properties of constraint systems.

\section{Spencer Mirror Theory in Spherical Constraint Systems}

To demonstrate a conceptual verification of our theory, we consider Spencer analysis of spherical constraint particle systems. This example contains all essential features of Spencer mirror theory and can verify our general theoretical framework.

\subsection{Physical System and Principal Bundle Construction}

Consider a particle of mass $m$ constrained to move on a sphere $\Sigma \subset \mathbb{R}^3$ of radius $R$. The physical symmetry of the system provides a natural basis for principal bundle construction: the sphere has $SO(3)$ rotational symmetry group, which is the intrinsic geometric symmetry of the system.

From a fiber bundle perspective, we consider the frame bundle $\mathrm{Fr}(\Sigma) \to \Sigma$ over the sphere, whose structure group is $SO(3)$. The fiber $\mathrm{Fr}_p(\Sigma)$ at each point $p \in \Sigma$ consists of all orthogonal frames at that point\cite{kobayashi1963foundations}.

The system's Lagrangian is:
\begin{equation}
L = \frac{1}{2}m\langle\dot{q}, \dot{q}\rangle - V(q) - \lambda(t)\phi(q),
\end{equation}
where $\phi(q) = \|q\|^2 - R^2$ is the constraint function and $\lambda\colon \mathbb{R} \to \mathbb{R}$ is the time-varying Lagrange multiplier.

The constraint force $\lambda \nabla\phi = 2\lambda q$ acts in the radial direction, but its response in the tangent space has $SO(3)$ structure. Given constraint forces, the system's dynamics on the sphere are determined by tangential components, and tangential rotations form the $SO(3)$ group.

\subsection{Compatible Pair Construction}

Let the principal bundle $P = \mathrm{Fr}(\Sigma)$ with typical fiber $SO(3)$. The compatible pair $(D,\lambda)$ is constructed as follows:

The constraint distribution $D \subset TP$ is defined as a horizontal distribution compatible with sphere tangent directions. Let $\omega \in \Omega^1(P, \mathfrak{so}(3))$ be the principal connection, then:
\begin{equation}
D_p = \{v \in T_pP \mid \omega(v) \in \ker(\mathrm{ad}_{\lambda(p)})\},
\end{equation}
where $\lambda\colon P \to \mathfrak{so}(3)^*$ is the dual constraint function.

The geometric meaning of dual function $\lambda$ is as follows: given a point $(q, e) \in P$ (where $q \in \Sigma$ and $e$ is a frame at $q$), $\lambda(q,e) \in \mathfrak{so}(3)^*$ encodes torsion information of constraints under that frame.

Using the standard identification $\mathfrak{so}(3) \cong \mathbb{R}^3$ (through correspondence between antisymmetric matrices and axial vectors), we can write:
\begin{equation}
\lambda(q,e) = \lambda_0(q) \cdot \nu(q),
\end{equation}
where $\lambda_0(q) \in \mathbb{R}$ is the constraint strength and $\nu(q) = q/\|q\| \in S^2$ is the radial unit vector.

\subsection{Spencer Complex Computation}

Spencer spaces are defined as $S^k_{D,\lambda} = \Omega^k(\Sigma) \otimes \mathrm{Sym}^k(\mathfrak{so}(3))$. Using $\mathfrak{so}(3) \cong \mathbb{R}^3$, we have:
\begin{equation}
S^k_{D,\lambda} = \Omega^k(\Sigma) \otimes \mathrm{Sym}^k(\mathbb{R}^3).
\end{equation}

Spencer prolongation operators $\delta^{\lambda}_{\mathfrak{so}(3)}\colon \mathrm{Sym}^k(\mathbb{R}^3) \to \mathrm{Sym}^{k+1}(\mathbb{R}^3)$ have the expression:
\begin{equation}
\delta^{\lambda}_{\mathfrak{so}(3)}(s)(v_1, \ldots, v_{k+1}) = \sum_{i=1}^{k+1} (-1)^{i+1} \langle\lambda, [v_i, \cdot]\rangle s(v_1, \ldots, \hat{v_i}, \ldots, v_{k+1}),
\end{equation}
where $[v_i, \cdot]\colon \mathbb{R}^3 \to \mathbb{R}^3$ is the adjoint action in $\mathfrak{so}(3)$.

For $\lambda = \lambda_0 \nu$ and $v, w \in \mathbb{R}^3$, the Lie bracket is $[v, w] = v \times w$, therefore:
\begin{equation}
\langle\lambda, [v_i, w]\rangle = \lambda_0 \langle\nu, v_i \times w\rangle = \lambda_0 \langle\nu \times v_i, w\rangle.
\end{equation}

This gives the form of Spencer prolongation operators:
\begin{equation}
\delta^{\lambda}_{\mathfrak{so}(3)}(s)(v_1, \ldots, v_{k+1}) = \lambda_0 \sum_{i=1}^{k+1} (-1)^{i+1} \langle\nu \times v_i, \cdot\rangle s(v_1, \ldots, \hat{v_i}, \ldots, v_{k+1}).
\end{equation}

\subsection{Zero-th Spencer Differential Analysis}

Consider elements $f \otimes v$ in $S^0_{D,\lambda} = C^\infty(\Sigma) \otimes \mathbb{R}^3$, where $f \in C^\infty(\Sigma)$ and $v \in \mathbb{R}^3$.

Spencer differential $D^0_{D,\lambda}\colon S^0_{D,\lambda} \to S^1_{D,\lambda}$ is:
\begin{align}
D^0_{D,\lambda}(f \otimes v) &= df \otimes v + f \otimes \delta^{\lambda}_{\mathfrak{so}(3)}(v)\\
&= df \otimes v + f \lambda_0 \langle\nu \times v, \cdot\rangle.
\end{align}

In spherical coordinates $(\theta,\phi)$, $\nu = (\sin \theta \cos \phi, \sin \theta \sin \phi, \cos \theta)$, for $v = (v_1, v_2, v_3)$:
\begin{align}
\nu \times v = (&\sin \theta \sin \phi \cdot v_3 - \cos \theta \cdot v_2,\\
&\cos \theta \cdot v_1 - \sin \theta \cos \phi \cdot v_3,\\
&\sin \theta \cos \phi \cdot v_2 - \sin \theta \sin \phi \cdot v_1).
\end{align}

This expression organically combines sphere geometry (through $df$) with Lie algebra algebraic structure (through $\nu \times v$).

\subsection{Mirror Transformation Analysis}

Mirror transformation $(D,\lambda) \mapsto (D,-\lambda)$ corresponds to $\lambda_0 \mapsto -\lambda_0$. Mirror Spencer differential is:
\begin{align}
D^0_{D,-\lambda}(f \otimes v) &= df \otimes v + f \otimes \delta^{-\lambda}_{\mathfrak{so}(3)}(v)\\
&= df \otimes v - f \lambda_0 \langle\nu \times v, \cdot\rangle.
\end{align}

Operator difference $\mathcal{R}^0 = D^0_{D,-\lambda} - D^0_{D,\lambda}$ gives:
\begin{equation}
\mathcal{R}^0(f \otimes v) = -2f \lambda_0 \langle\nu \times v, \cdot\rangle.
\end{equation}

This verifies the formula $\mathcal{R}^k = -2(-1)^k \omega \otimes \delta^{\lambda}_{\mathfrak{g}}(s)$ in our general theory.

$\mathcal{R}^0$ is a non-trivial zero-order operator whose compactness comes from the following analysis: the operator $v \mapsto \nu \times v$ at each point $\nu \in S^2$ is a bounded linear transformation with operator norm $\|\nu \times \cdot\| = 1$. Therefore:
\begin{equation}
\|\mathcal{R}^0(f \otimes v)\|_{L^2(\Sigma)} \leq 2|\lambda_0| \|f\|_{L^2(\Sigma)} \|v\|_{\mathbb{R}^3}.
\end{equation}

\subsection{Mirror Invariance of Spencer Cohomology}

Zero-th Spencer cohomology $H^0_{\mathrm{Spencer}}(D,\lambda) = \ker(D^0_{D,\lambda})$ consists of $f \otimes v$ satisfying:
\begin{equation}
df \otimes v + f \lambda_0 \langle\nu \times v, \cdot\rangle = 0.
\end{equation}

This is equivalent to two conditions: $df = 0$ (i.e., $f$ is constant) and $\lambda_0 \langle\nu \times v, \cdot\rangle = 0$ (i.e., $\nu \times v = 0$, meaning $v$ is parallel to $\nu$).

The constraint $\nu \times v = 0$ means $v$ is parallel to the radial direction at each point. Since radial directions are not tangent vectors to the sphere, this condition actually requires that the projection of $v$ onto the tangent space of the sphere be zero.

For mirror systems, the condition becomes $(-\lambda_0) \langle\nu \times v, \cdot\rangle = 0$, i.e., $\nu \times v = 0$, which is the same as the original system.

Therefore: $\dim H^0_{\mathrm{Spencer}}(D,\lambda) = \dim H^0_{\mathrm{Spencer}}(D,-\lambda)$, verifying mirror invariance.

\subsection{Higher-Order Analysis}

First-order Spencer cohomology computation involves $S^1_{D,\lambda} = \Omega^1(\Sigma) \otimes \mathrm{Sym}^2(\mathbb{R}^3)$. Spencer differential $D^1_{D,\lambda}\colon S^1_{D,\lambda} \to S^2_{D,\lambda}$ is:
\begin{equation}
D^1_{D,\lambda}(\alpha \otimes s) = d\alpha \otimes s - \alpha \otimes \delta^{\lambda}_{\mathfrak{so}(3)}(s).
\end{equation}

$\delta^{\lambda}_{\mathfrak{so}(3)}(s)$ acting on $\mathrm{Sym}^2(\mathbb{R}^3)$ involves computation:
\begin{equation}
\delta^{\lambda}_{\mathfrak{so}(3)}(s)(u,v,w) = \lambda_0[\langle\nu \times u, v\rangle s(w) + \langle\nu \times u, w\rangle s(v) + \text{cyclic permutations}].
\end{equation}

Although computations are complex, verification of mirror invariance is feasible in principle, with basic structures determined.

This analysis demonstrates key features of Spencer mirror theory: Spencer prolongation operators are indeed non-trivial, embodying the complexity of Lie algebra structures; Spencer differentials organically combine de Rham complexes of spheres with algebraic structures of $\mathfrak{so}(3)$; mirror symmetry is deep, with cohomology remaining invariant; all quantities have explicit expressions and can in principle be verified numerically.

\section{Mirror Invariants and Physical-Mathematical Application Framework}

Based on the preceding theoretical analysis, we can now systematically discuss mirror invariants of Spencer complexes and establish computational frameworks. These invariants not only have important theoretical significance but also provide effective tools for practical numerical computations and physical applications. Our classical geometric theory demonstrates profound structural insights and potential applications in multiple mathematical physics fields.

\subsection{Basic Mirror Invariants}

Spencer complex mirror symmetry theory establishes a series of invariants that are strictly preserved under sign transformations. These invariants can be verified through numerical computations, providing practical testing means for theoretical results, while having deep geometric interpretations in classical field theory and constraint system analysis.

Hodge numbers $h^k(D,\lambda) := \dim H^k_{\mathrm{Spencer}}(D,\lambda)$ are the most important mirror invariants. According to our theoretical analysis, these values are strictly equal under mirror transformations: $h^k(D,\lambda) = h^k(D,-\lambda)$. From a geometric perspective, Hodge numbers reflect the topological complexity of Spencer complexes, and their invariance shows that the essential geometric structure of constraint systems is independent of the sign choice of dual functions. This property has important significance in constraint mechanics system analysis, as it ensures that positive and negative constraint force configurations have the same number of topological obstructions.

Euler characteristics $\chi(D,\lambda) := \sum_{k=0}^n (-1)^k h^k(D,\lambda)$ as global topological invariants of Spencer complexes, their mirror invariance $\chi(D,\lambda) = \chi(D,-\lambda)$ reveals deep symmetry structures in topological properties of constraint systems. This symmetry reflects intrinsic characteristics of constraint geometry, unaffected by sign changes of dual functions. In classical mechanics contexts, this means the net topological charge of systems is a true geometric invariant.

The preservation of Spencer metric structures under sign transformations has profound geometric significance. Mirror invariance of both Spencer metrics means all properties of metric geometry, including volume elements, distance functions, geodesics, etc., remain consistent in mirror systems. This metric invariance provides a stable foundation for geometric analysis of Spencer complexes and symmetry principles for energy analysis of constraint systems.

\subsection{Geometric Connections with Gauge Field Theory}

Our mirror symmetry theory, while established within classical geometric frameworks, has deep geometric analogies with constraint structures in gauge field theory. These analogies provide new perspectives for understanding the geometric essence of constraint dynamics.

In Yang-Mills theory, gauge fixing processes involve choosing constraint distributions and corresponding Faddeev-Popov procedures\cite{faddeev1967ghosts}. Our compatible pair $(D,\lambda)$ structure can be understood in this context as classical-level encoding of constraint geometry. Constraint distribution $D$ corresponds to geometric realization of gauge fixing conditions, while dual function $\lambda$ encodes constraint strength information. Mirror transformation $\lambda \mapsto -\lambda$ under this interpretation corresponds to changing constraint force directions, but the geometric essence of systems remains invariant.

Spencer cohomology dimensions can be physically interpreted as the number of independent modes in constraint systems. Mirror invariance shows this number is independent of sign choices in constraint realization, providing geometric perspectives for understanding physical polarization state numbers in gauge theories. Although our theory remains at classical geometric levels, such structural insights may provide important guidance for subsequent quantization analysis.

BRST formalism nilpotent operator structures\cite{henneaux1992quantization} have interesting analogies with Spencer differential complex properties. Spencer differentials satisfy $(D^k_{D,\lambda})^2 = 0$, which is algebraically similar to BRST operator nilpotency $Q_{\mathrm{BRST}}^2 = 0$. Our mirror symmetry analysis reveals stability of such complex structures under sign transformations, which may provide new ideas for understanding geometric origins of BRST symmetry, although transitions from classical to quantum theory still require further deep study.

\subsection{Structural Analogies with String Theory Mirror Symmetry}

Our Spencer complex mirror theory has striking structural analogies with mirror symmetry in string theory\cite{greene1990mirror, hori2003mirror}, although the mathematical foundations and physical mechanisms of both are essentially different. These analogies mainly manifest in general principles where dual transformations preserve certain topological invariants.

In Calabi-Yau mirror symmetry, exchanges of complex structure moduli and Kähler moduli lead to specific correspondences of Hodge numbers: $h^{p,q}(X) = h^{q,p}(\tilde{X})$\cite{candelas1991mirror}. This correspondence reflects deep duality between complex geometry and Kähler geometry. In our Spencer theory, mirror transformation $(D,\lambda) \mapsto (D,-\lambda)$ while superficially simple, but its resulting Hodge number invariance $h^k_{\mathrm{Spencer}}(D,\lambda) = h^k_{\mathrm{Spencer}}(D,-\lambda)$ similarly reflects subtle balance between geometric structures and algebraic structures.

Common features of both theories involve duality between complex geometry and algebra. In Calabi-Yau cases, mirror symmetry exchanges different coordinate systems of moduli spaces; in Spencer cases, mirror transformations change algebraic signs of constraints but preserve geometric distribution structures. This structural similarity hints at deeper mathematical unifying principles worth further exploration.

From phenomenological perspectives, both mirror symmetries preserve some form of "topological charge" conservation. In Calabi-Yau cases it's Euler characteristics, in Spencer cases it's Spencer Euler characteristics. This universality suggests mirror symmetry may be a general phenomenon in complex geometry, not merely accidental features of specific theories.

\subsection{Methodological Contributions to Elliptic Operator Theory}

Our mirror symmetry analysis provides new perturbation analysis paradigms for elliptic operator theory, demonstrating how to organically combine algebraic symmetries with analytical properties. This methodological innovation applies not only to Spencer complexes but also provides new ideas for broader elliptic complex analysis.

Traditional elliptic operator perturbation theory mainly focuses on small changes in operator coefficients or boundary condition adjustments\cite{kato1995perturbation}. Our approach re-examines perturbation problems from algebraic symmetry perspectives. The key observation is that when perturbations arise from intrinsic algebraic symmetries, their impacts on analytical properties are often controllable and predictable. This provides systematic methods for symmetry analysis in elliptic theory.

We proved that algebraically originated zero-order perturbations $\mathcal{R}^k = -2(-1)^k\omega \otimes \delta^{\lambda}_{\mathfrak{g}}(s)$ change Spencer differential forms but don't affect their ellipticity and complex structures. This concept of "structure-preserving perturbations" provides new tools for analyzing properties of elliptic operator families. Perturbation compactness derives directly from algebraic origins without relying on complex Sobolev embedding analysis.

In Atiyah-Singer index theorem\cite{atiyah1963index} frameworks, our method provides new perspectives for studying index symmetry properties. When elliptic operators have algebraic symmetry transformations, index invariance under symmetry groups can be established through our perturbation analysis without appealing to abstract K-theory mechanisms. This method's advantage lies in its constructive and computation-friendly nature.

\subsection{Insights for Computational Geometry and Numerical Analysis}

Our theory has produced important insights for computational geometry and numerical analysis research, particularly in design concepts for high-precision elliptic solvers.

Symmetry-enhanced numerical methods are natural development directions. The basic idea is using mirror symmetry to construct symmetric versions of original problems, then improving solution precision and stability through symmetric averaging. Theoretical analysis shows that when perturbation operators $\mathcal{R}^k$ satisfy compactness conditions we established, symmetric averaged solutions should have better error properties than individual solutions.

The theoretical foundation of this method lies in combining elliptic regularization theory with symmetry principles. Symmetric averaging processes are actually geometric regularization, utilizing intrinsic symmetric structures of problems to eliminate asymmetric components of numerical errors. Although implementation details and performance evaluation require further numerical research, theoretical frameworks provide solid foundations for developing such methods.

Another promising direction is symmetry-guided adaptive mesh refinement. Traditional adaptive methods are mainly based on a posteriori error estimates, while symmetry information can provide a priori mesh optimization guidance. In mirror symmetric regions, meshes should have corresponding symmetric structures, automatically ensuring symmetry properties of numerical solutions.

\subsection{Theoretical Connections with Modern Mathematical Physics}

Our classical geometric theory has deep structural connections with multiple branches of modern mathematical physics. Although these connections still require further theoretical development to establish, they already show important research value.

In topological quantum field theory contexts, Spencer complex mirror symmetry may relate to symmetry properties of certain topological invariants. State numbers in Chern-Simons theory indeed exhibit mirror-type symmetry in certain cases\cite{witten1989quantum}, although exact conditions and generality of such symmetry require careful study. Our classical theory provides new analytical tools for understanding geometric origins of such phenomena.

In AdS/CFT correspondence\cite{maldacena1999large} frameworks, there are deep connections between constraint structures in bulk theory and Ward identities in boundary theory. Our compatible pair theory, while established on classical geometric foundations, may provide new geometric perspectives for understanding such bulk-boundary correspondence relationships through its constraint analysis methods. Particularly, Spencer cohomology invariance under mirror transformations may correspond to invariance of certain boundary theory quantities under dual transformations.

Constraint analysis in quantum gravity theory is another potential application field. Constraint operators in loop quantum gravity\cite{ashtekar2004background}, while quantum operators, may have connections with our Spencer theory in their classical limit geometric structures. Mirror symmetry concepts may provide new ideas for understanding time problems and spectral properties of Hamilton constraints in quantum gravity.

Noncommutative geometry\cite{connes2013noncommutative} development also provides possibilities for extending our theory. Spencer complex construction principles can be extended to noncommutative cases, and mirror symmetry in noncommutative settings may exhibit richer structures. Such extensions are not only mathematically meaningful but may also provide geometric tools for understanding noncommutative effects in certain physical theories.

\subsection{Future Research Prospects}

Our theory opens multiple important research directions that have significant value in both theoretical development and practical applications.

Theory quantization problems are primary concerns. Our classical geometric theory provides good starting points for quantization, but transitions from classical Spencer complexes to quantum Spencer complexes involve profound technical challenges. Key questions include how to preserve mirror symmetry at quantum levels and how quantization processes affect Spencer cohomology structures. Resolving these problems may provide new tools for understanding geometric properties of constrained quantum systems.

Practical implementation of numerical computation methods is another important direction. Although we established theoretical frameworks, transforming these theories into practical computational algorithms requires substantial technical work. Particularly needed are efficient numerical implementations of Spencer differentials and error control methods based on mirror symmetry. Such research not only helps verify our theoretical predictions but may also provide new tools for related engineering applications.

Theory generalization and deepening also provide rich research opportunities. For example, extending mirror symmetry to more general Lie group automorphisms, studying compatible pair Spencer theory on non-compact manifolds, and exploring relationships between Spencer complexes and other geometric structures. These generalizations are not only mathematically meaningful but may also provide theoretical tools for understanding more complex physical systems.

Cross-disciplinary research with other mathematical branches is also full of potential. For example, derived category theory in algebraic geometry, categorification methods in representation theory, and homotopy theory in topology may all have meaningful connections with our Spencer mirror theory. Such cross-disciplinary research not only helps deepen our understanding of mirror symmetry but may also provide new momentum for related mathematical theory development.

\bibliographystyle{alpha}
\bibliography{ref}

\end{document}